\newtheorem{Theorem}{Theorem}[section]
\newtheorem{Lemma}[Theorem]{Lemma}
\newtheorem{Proposition}[Theorem]{Proposition}
\newtheorem{Corollary}[Theorem]{Corollary}
\newtheorem{Example}[Theorem]{Example}
\newtheorem{Remark}[Theorem]{Remark}
\newtheorem{Definition}[Theorem]{Definition}
\newtheorem{Notation}[Theorem]{Notation}
\newtheorem{Question}[Theorem]{Question}
\newtheorem*{Theorem A}{Theorem A}
\newcommand*{\longhookrightarrow}{\ensuremath{\lhook\joinrel\relbar\joinrel\rightarrow}}
\newcommand*{\overbar}[1]{\mkern 1.5mu\overline{\mkern-1.5mu#1\mkern-1.5mu}\mkern 1.5mu}
\begin{document}
\author{Charlie Beil}
 \address{Institut f\"ur Mathematik und Wissenschaftliches Rechnen, Universit\"at Graz, Heinrichstrasse 36, 8010 Graz, Austria.}
 \email{charles.beil@uni-graz.at}
 \title[The central nilradical of nonnoetherian dimer algebras]{The central nilradical of\\ nonnoetherian dimer algebras}
 \keywords{Non-noetherian ring, dimer algebra, superpotential algebra, quiver with potential.}
 \subjclass[2020]{16N40,16G20,16S38} 
 \date{}

\begin{abstract}
Let $Z$ be the center of a nonnoetherian dimer algebra $A$ on a torus.
We show that the nilradical $\operatorname{nil}Z$ of $Z$ is prime, may be nonzero, and consists precisely of the central elements that vanish under a cyclic contraction of $A$.
This implies that the nonnoetherian scheme $\operatorname{Spec}Z$ is irreducible.
We also show that the reduced center $\hat{Z} = Z/\operatorname{nil}Z$ embeds into the center $R$ of the corresponding ghor algebra, and that their normalizations are equal.
Finally, we give three characterizations of the normality of $R$, and show that if $\hat{Z}$ is normal, then it has the special form $k + J$ where $J$ is an ideal of the cycle algebra of $A$.
\end{abstract}

\maketitle

\section{Introduction}

The main objective of this article is to establish certain key algebraic and geometric properties of the centers of nonnoetherian dimer algebras and ghor algebras on a torus.
A dimer algebra is a type of quiver algebra whose quiver embeds into a compact surface with homotopy-like relations defined on its paths.
Dimer algebras originated in string theory in 2005 \cite{HK, F-K, F-W, HV}, and since have found wide application to many areas of mathematics. 
Among these areas are noncommutative algebraic geometry \cite{B5,B7,Bo,Br,CQ,D,IN,IU,MR}, cluster algebras and categories \cite{BKM,GK,K,MS,P,RW}, mirror symmetry \cite{F-V,FU,HN}, and number theory \cite{BGH,H}.
The torus is special among other surfaces without boundary in that only on a torus do noetherian dimer algebras have exceptionally nice algebraic and homological properties as noncommutative crepant resolutions.
Ghor algebras, in contrast, remain suitably nice on higher genus surfaces \cite{BB1,BB2}.
Throughout, we will restrict our attention to dimer and ghor algebras on a torus.

A dimer algebra is noetherian if and only if its center is noetherian \cite[Theorem 1.1]{B4}.
Regardless of noetherianity, the Krull dimension of the center is three; if \textit{non}noetherian, then the center is the coordinate ring for a three-dimensional affine toric variety with a single `positive dimensional' point \cite[Theorem 1.1]{B6}. 
Recall that an integral domain is normal if it is integrally closed in its field of fractions.
A well known property of noetherian dimer algebras---being noncommutative crepant resolutions---is that their centers are normal integral domains.
In this article we consider the questions: \textit{Is the center of a nonnoetherian dimer algebra necessarily normal, or necessarily a domain?
If not, what can be said about its normalization, zero divisors, and nilradical?}

We will use two fundamental tools to answer these questions, both introduced in \cite{B2}: cyclic contractions and ghor algebras. 
A \textit{cyclic contraction} $\psi: A \to A'$ of a dimer algebra $A = kQ/I$ is the contraction of a particular set of arrows of $Q$ to vertices such that the cycles of $Q$ are preserved and the resulting dimer algebra $A'$ is noetherian (see (\ref{cycle algebra})). 
Remarkably, every (nondegenerate) dimer algebra admits a cyclic contraction \cite[Theorem 1.1]{B1}.
The \textit{ghor algebra} of $Q$ is the quotient
\begin{equation} \label{homotopy algebra}
\Lambda := A/\langle p - q \ | \ \text{$p,q$ a non-cancellative pair} \rangle,
\end{equation}
where a pair of distinct paths $p,q$ is said to be \textit{non-cancellative} if there is a path $r$ such that $rp = rq \not = 0$ or $pr = qr \not = 0$. 
A dimer algebra equals its ghor algebra if and only if it is noetherian \cite[Theorem 1.1]{B4}.
The center $R = Z(\Lambda)$ of $\Lambda$ will play an essential role in deciphering the structure of the center $Z$ of $A$. 
Our main theorem is the following.

\begin{Theorem} (\ref{nil}, \ref{subalgebra}, \ref{domains}, \ref{integral closure theorem}, \ref{nonnoetherian theorem}, \ref{R normal'}, \ref{normal corollary}.)
Let $A = kQ/I$ be a nonnoetherian dimer algebra on a torus with center $Z$, let $R$ be the center of its ghor algebra $\Lambda$, and let $\psi: A \to A'$ be any cyclic contraction of $A$. 
\begin{enumerate}
\item The nilpotent central elements of $A$ are precisely the central elements in the kernel of $\psi$,
\begin{equation*}
\operatorname{nil}Z = Z \cap \operatorname{ker} \psi.
\end{equation*}
\item The reduced center $\hat{Z} := Z/\operatorname{nil}Z$ is an integral domain.
The scheme $\operatorname{Spec}Z$ is therefore irreducible.
\item $\hat{Z}$ is a subalgebra of $R$, and their normalizations are equal and nonnoetherian.
\item $R$ is normal if and only if $R = k + J$ for some ideal $J$ of the center $Z'$ of $A'$.
Consequently, if $\hat{Z}$ is normal, then $\hat{Z} = k + J$.
\end{enumerate}
\end{Theorem}

We give examples of dimer algebras exhibiting various properties of the central nilradical. 
Notably, we show that $\operatorname{nil}Z$ may be nonzero (Example \ref{first example}); the containment $\hat{Z} \subseteq R$ may be proper (Example \ref{iso R}); and $\hat{Z}$ may not be normal (Proposition \ref{R is not normal}).

\section{Preliminaries} \label{definitions}

Throughout, $k$ is an algebraically closed field of characteristic zero.
Given a quiver $Q$, we denote by $kQ$ the path algebra of $Q$, and by $Q_{\ell}$ the paths of length $\ell$.
The vertex idempotent at vertex $i \in Q_0$ is denoted $e_i$, and the head and tail maps are denoted $\operatorname{h},\operatorname{t}: Q_1 \to Q_0$.
By monomial, we mean a non-constant monomial.

\newpage
\begin{Definition} \rm{ \
\begin{itemize}
 \item A \textit{dimer quiver} $Q$ is a quiver whose underlying graph $\overbar{Q}$ embeds into a compact surface $\Sigma$ such that each connected component of $\Sigma \setminus \overbar{Q}$ is simply connected and bounded by an oriented cycle of length at least $2$, called a \textit{unit cycle}.\footnote{The dual graph of a dimer quiver is called a dimer model or brane tiling, or, if on a disc, a plabic (= planar bicolored) graph \cite{P}.}
The \textit{dimer algebra} $A$ of $Q$ is the quotient $kQ/I$, where $I$ is the ideal
\begin{equation*} \label{I}
I := \left\langle p - q \ | \ \exists a \in Q_1 \ \text{s.t.\ $pa$ and $qa$ are unit cycles} \right\rangle \subset kQ,
\end{equation*}
and $p,q$ are paths.
Throughout, we will take $\Sigma$ to be a real two-torus.
 \item A \textit{perfect matching} $x$ of $Q$ is a set of arrows such that each unit cycle contains precisely one arrow in $x$.
A perfect matching $x$ is \textit{simple} if there is an oriented path between any two vertices in $Q \setminus x$.
In particular, $x$ is a simple matching if $Q \setminus x$ supports a simple $A$-module of dimension $1^{Q_0}$.
 \item A dimer quiver is \textit{nondegenerate} if each arrow is contained in a perfect matching.
Throughout, we will take all dimer quivers to be nondegenerate.
 \item If $p$ is a path in $Q$, then we refer to $p + I$ as a \textit{path} in $A$ since each representative of $p+I$ is a path.
 If $p,q$ are paths in $Q$ (resp.\ $A$) that are equal in $A$, then we will write $p \equiv q$ (resp.\ $p = q$).
 \item $A$ and $Q$ are \textit{non-cancellative} if there are paths $p,q,r \in A$ for which $p \not = q$, and
\begin{equation*} 
pr = qr \not = 0 \ \ \ \text{ or } \ \ \ rp = rq \not = 0;
\end{equation*}
 in this case, $p,q$ is called a \textit{non-cancellative pair}.
 Otherwise, $A$ and $Q$ are \textit{cancellative}; cancellativity was introduced in \cite{D}.
A (nondegenerate) dimer algebra is cancellative if and only if it is noetherian \cite[Theorem 1.1]{B4}. 
\end{itemize}
}\end{Definition}

\begin{Notation} \rm{
Let $\pi: \mathbb{R}^2 \rightarrow T^2$ be a covering map such that for some $i \in Q_0$,
\begin{equation*}
\pi(\mathbb{Z}^2) = i.
\end{equation*}
Denote by $Q^+ := \pi^{-1}(Q) \subset \mathbb{R}^2$ the covering quiver of $Q$.
For each path $p$ in $Q$, denote by $p^+$ the unique path in $Q^+$ with tail in the unit square $[0,1) \times [0,1) \subset \mathbb{R}^2$ satisfying $\pi(p^+) = p$.

For paths $p$, $q$ satisfying
\begin{equation} \label{exception}
\operatorname{t}(p^+) = \operatorname{t}(q^+) \ \ \ \text{ and } \ \ \ \operatorname{h}(p^+) = \operatorname{h}(q^+),
\end{equation}
denote by $\mathcal{R}_{p,q}$ the compact region in $\mathbb{R}^2 \supset Q^+$ bounded by (representatives of) $p^+$ and $q^+$, and denote by $\mathcal{R}_{p,q}^{\circ}$ the interior of $\mathcal{R}_{p,q}$.
}\end{Notation}

\begin{Notation} \rm{
By a \textit{cyclic subpath} of a path $p$, we mean a subpath of $p$ that is a nontrivial cycle.
Consider the following sets of cycles in $A$:
\begin{itemize}
 \item Let $\mathcal{C}$ be the set of cycles in $A$ (i.e., cycles in $Q$ modulo $I$).
 \item For $u \in \mathbb{Z}^2$, let $\mathcal{C}^u$ be the set of cycles $p \in \mathcal{C}$ such that
\begin{equation*}
\operatorname{h}(p^+) = \operatorname{t}(p^+) + u \in Q_0^+.
\end{equation*}
 \item For $i \in Q_0$, let $\mathcal{C}_i$ be the set of cycles in the vertex corner ring $e_iAe_i$.
 \item Let $\hat{\mathcal{C}}$ be the set of cycles $p \in \mathcal{C}$ such that $(p^2)^+$ does not have a cyclic subpath; or equivalently, the lift of each cyclic permutation of $p$ does not have a cyclic subpath.
\end{itemize}
We denote the intersection $\hat{\mathcal{C}} \cap \mathcal{C}^u \cap \mathcal{C}_i$, for example, by $\hat{\mathcal{C}}^u_i$.
Note that $\mathcal{C}^0$ is the set of cycles whose lifts are cycles in $Q^+$.
In particular, $\hat{\mathcal{C}}^0 = Q_0$.
Furthermore, the lift of any cycle $p$ in $\hat{\mathcal{C}}$ has no cyclic subpaths, although $p$ itself may have cyclic subpaths.
}\end{Notation}

\begin{Lemma} \label{4.13.2} \cite[Lemma 4.13.2]{B2}
Suppose paths $p^+$, $q^+$ have no cyclic subpaths modulo $I$, satisfy (\ref{exception}), and bound a region $\mathcal{R}_{p,q}$ that contains no vertices in its interior.
Then $p \equiv q$.
\end{Lemma}

Let $A = kQ/I$ be a dimer algebra.
For each perfect matching $x$ of $A$, consider the map
\begin{equation*}
n_x: Q_{\geq 0} \to \mathbb{Z}_{\geq 0}
\end{equation*}
defined by sending a path $p$ to the number of arrow subpaths of $p$ that are contained in $x$.
Observe that $n_x$ is additive on concatenated paths.
Furthermore, if $p,p' \in Q_{\geq 0}$ are paths satisfying $p + I = p' + I$, then $n_x(p) = n_x(p')$, by \cite[Lemma 2.1]{B2}.
In particular, $n_x$ induces a well-defined map on the paths of $A$.

Now consider dimer algebras $A = kQ/I$ and $A' = kQ'/I'$, and suppose $Q'$ is obtained from $Q$ by contracting a set of arrows $Q_1^* \subset Q_1$ to vertices.
This contraction defines a $k$-linear map of path algebras
\begin{equation*}
\psi: kQ \to kQ'.
\end{equation*}
If $\psi(I) \subseteq I'$, then $\psi$ induces a $k$-linear map of dimer algebras $\psi: A \to A'$, called a \textit{contraction}.\footnote{If, for example, $\psi$ contracts a unit cycle to a vertex, then $\psi(I) \not \subseteq I'$ by \cite[Lemma 3.5]{B2}.} 

To specify the structure we wish $\psi$ to preserve, consider the polynomial ring $k[\mathcal{S}']$ generated by the simple matchings $\mathcal{S}'$ of $A'$.
To each path $p \in A'$, associate the monomial
\begin{equation*} \label{tau bar def}
\bar{\tau}(p) := \prod_{x \in \mathcal{S}'} x^{n_x(p)} \in k[\mathcal{S}'].
\end{equation*}
For each $i,j \in Q'_0$, this association may be extended to a $k$-linear map $\bar{\tau}: e_jA'e_i \to k[\mathcal{S}']$, and is an algebra homomorphism if $i = j$.
Given $p \in e_jAe_i$ and $q \in e_{\ell}A'e_k$, we shall write
\begin{equation*}
\overbar{p} := \bar{\tau}_{\psi}(p) := \bar{\tau}(\psi(p)) \ \ \ \text{ and } \ \ \ \overbar{q} := \bar{\tau}(q).
\end{equation*}

$\psi$ is called a \textit{cyclic contraction} if $A'$ is cancellative and
\begin{equation} \label{cycle algebra}
S := k \left[ \cup_{i \in Q_0} \bar{\tau}_{\psi}(e_iAe_i) \right] = k \left[ \cup_{i \in Q'_0} \bar{\tau}(e_iA'e_i) \right] =: S'.
\end{equation}
The algebra $S$, called the \textit{cycle algebra}, is independent of the choice of cyclic contraction $\psi$ \cite[Theorem 3.14]{B3}.
$S$ is also isomorphic to the center of $A'$, and is a depiction of both the reduced center of $A$ and the center of $\Lambda$ \cite[Theorem 1.1]{B6}.
Surprisingly, every nondegenerate dimer algebra admits a cyclic contraction \cite[Theorem 1.1]{B1}.
Cyclic contractions and the cycle algebra were introduced in \cite{B2}.

In addition to the cycle algebra $S$, we will also consider the \textit{ghor center} of $A$,
\begin{equation*}
R := k\left[ \cap_{i \in Q_0} \bar{\tau}_{\psi}(e_iAe_i) \right] = \bigcap_{i \in Q_0} \bar{\tau}_{\psi}(e_iAe_i).
\end{equation*}
$R$ is isomorphic to the center of the ghor algebra $\Lambda$, given in (\ref{homotopy algebra}) \cite[Theorem 1.1]{B2}.

For $g, h$ in $R$ or $S$, we shall write $g \mid h$ if $g$ divides $h$ in the polynomial ring $k[\mathcal{S}']$.

The following lemmas will be useful.

\begin{Lemma} \label{from B}
Let $\psi: A \to A'$ be a cyclic contraction.
\begin{enumerate}
 \item If $p$ and $q$ are paths in $A$ (or $A'$) satisfying $qp \not = 0$, then $\overbar{qp} = \overbar{q} \overbar{p}$.
 \item For each $i,j \in Q'_0$, the $k$-linear map $\bar{\tau}: e_jA'e_i \to k[\mathcal{S}']$ is injective.
\end{enumerate}
\end{Lemma}

\begin{proof}
(1) holds since for each simple matching $x \in \mathcal{S}$, the map $n_x$ is additive on concatenated paths.
(2) holds by \cite[Proposition 4.30]{B2}.
\end{proof}

\begin{Lemma} \label{unit cycle lemma}
If $\sigma_i$, $\sigma'_i$ are unit cycles at $i \in Q_0$, then $\sigma_i = \sigma'_i$ in $A$.
Furthermore, the element $\sum_{i \in Q_0} \sigma_i \in A$ is central.
\end{Lemma}

\begin{proof}
Clear.
\end{proof}

We denote by $\sigma_i$ the (unique) unit cycle at $i \in Q_0$ modulo $I$, and by $\sigma$ the monomial
\begin{equation*}
\sigma := \overbar{\sigma}_i = \prod_{x \in \mathcal{S}'}x.
\end{equation*}

\begin{Lemma} \label{p sigma = q sigma} \cite[Lemma 4.3.1]{B2}
If $p,q \in e_jAe_i$ are paths satisfying (\ref{exception}), then there is an $m,n \geq 0$ such that $p \sigma_i^m = q \sigma_i^n$.
\end{Lemma}

\begin{Lemma} \label{cyclelemma}
Suppose $Q$ admits a cyclic contraction. 
Let $u,v \in \mathbb{Z}^2$ and let $p \in \mathcal{C}^u$, $q \in \mathcal{C}^v$ be cycles.
\begin{enumerate}
 \item $u = 0$ if and only if $\overbar{p} = \sigma^m$ for some $m \geq 1$.
 \item $u = v$ if and only if $\overbar{p} = \overbar{q}\sigma^m$ for some $m \in \mathbb{Z}$.
 \item $p \not \in \hat{\mathcal{C}}$ if and only if $\sigma \mid \overbar{p}$.
\end{enumerate}
\end{Lemma}

\begin{proof}
({$\text{1}, \Rightarrow$}): \cite[Lemma 5.2]{B2}.

({$\text{2}, \Rightarrow$}): \cite[Lemma 4.19]{B2}.

({$\text{2}, \Leftarrow$}):
Suppose $\overbar{p} = \overbar{q} \sigma^m$ for some $m \in \mathbb{Z}$.
Let $r$ (resp.\ $s$) be a path whose lift $r^+$ ($s^+$) has tail (head) $\operatorname{h}(p^+)$ and head (tail) $\operatorname{h}(q^+)$.
Since $\overbar{p} = \overbar{q} \sigma^m$, there is some $\ell \in \mathbb{Z}$ such that $\overbar{r} = \sigma^{\ell}$.
In particular, $\bar{\tau}(\psi(r)) = \sigma^{\ell}$.
Thus, $\psi(r)$ is in $\mathcal{C}'^0$ since $A'$ is cancellative \cite[Lemma 4.29]{B2}.
Whence, $\psi$ contracts $s$ to a vertex.
Furthermore, $\psi$ does not contract any cycle to a vertex since $\psi$ is a cyclic contraction \cite[Lemma 3.6.1]{B2}.
Therefore $s$ is a vertex.
But then $r$ is in $\mathcal{C}^0$ and $\operatorname{h}(p^+) = \operatorname{h}(q^+)$.
Consequently, $u = v$.

({$\text{1}, \Leftarrow$}): Follows from ({$\text{2}, \Leftarrow$}) with $q = e_{\operatorname{t}(p)}$. 

({$\text{3}, \Rightarrow$}): \cite[Lemma 5.2]{B2}.

({$\text{3}, \Leftarrow$}):
First recall that if $\psi(p)$ is not in $\hat{\mathcal{C}}'$, then the lift $(\psi(p)^{2})^+$ has a cyclic subpath (by definition).
Whence, the lift $(p^{2})^+$ has a cyclic subpath (though the converse need not hold).
Thus, $p$ is not in $\hat{\mathcal{C}}$.
Equivalently, if $p$ is in $\hat{\mathcal{C}}$, then $\psi(p)$ is in $\hat{\mathcal{C}}'$.
But then $\sigma \nmid \bar{\tau}(\psi(p)) = \overbar{p}$ since $A'$ is cancellative, by \cite[Proposition 4.21.1]{B2}.
\end{proof}

\begin{figure}
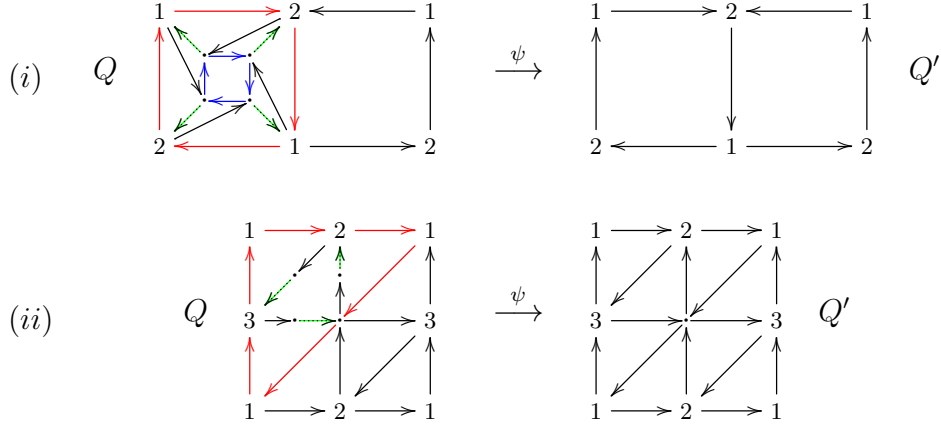

\begin{equation*}
\begin{array}{lc}
(i) &
\begin{array}{rcccl}
Q &
\xy
(-18,9)*+{\text{\scriptsize{$1$}}}="1";(0,9)*+{\text{\scriptsize{$2$}}}="2";(18,9)*+{\text{\scriptsize{$1$}}}="3";
(-18,-9)*+{\text{\scriptsize{$2$}}}="4";(0,-9)*+{\text{\scriptsize{$1$}}}="5";(18,-9)*+{\text{\scriptsize{$2$}}}="6";
(-12,3)*{\cdot}="7";(-6,3)*{\cdot}="8";(-12,-3)*{\cdot}="9";(-6,-3)*{\cdot}="10";
{\ar@{->}@[red]"1";"2"};{\ar@{->}@[red]"2";"5"};{\ar@{->}@[red]"5";"4"};{\ar@{->}@[red]"4";"1"};
{\ar@{->}"6";"3"};{\ar@{->}"3";"2"};{\ar@{->}"5";"6"};
{\ar@{->}@[green]"7";"1"};{\ar@{..>}"7";"1"};{\ar@{->}@[green]"8";"2"};{\ar@{..>}"8";"2"};
{\ar@{->}@[green]"9";"4"};{\ar@{..>}"9";"4"};{\ar@{->}@[green]"10";"5"};{\ar@{..>}"10";"5"};
{\ar@{->}@[blue]"7";"8"};{\ar@{->}@[blue]"8";"10"};{\ar@{->}@[blue]"10";"9"};{\ar@{->}@[blue]"9";"7"};
{\ar@{->}"1";"9"};{\ar@{->}"2";"7"};{\ar@{->}"5";"8"};{\ar@{->}"4";"10"};
\endxy
& \ \ \stackrel{\psi}{\longrightarrow} \ \ &
\xy
(-18,9)*+{\text{\scriptsize{$1$}}}="1";(0,9)*+{\text{\scriptsize{$2$}}}="2";(18,9)*+{\text{\scriptsize{$1$}}}="3";
(-18,-9)*+{\text{\scriptsize{$2$}}}="4";(0,-9)*+{\text{\scriptsize{$1$}}}="5";(18,-9)*+{\text{\scriptsize{$2$}}}="6";
{\ar@{->}"1";"2"};{\ar@{->}"2";"5"};{\ar@{->}"5";"4"};{\ar@{->}"4";"1"};
{\ar@{->}"6";"3"};{\ar@{->}"3";"2"};{\ar@{->}"5";"6"};
\endxy
& Q'
\end{array}
\\ \\
(ii) &
\begin{array}{rcccl}
Q &
\xy
(0,0)*{\cdot}="5";(-12,12)*+{\text{\scriptsize{$1$}}}="1";(0,12)*+{\text{\scriptsize{$2$}}}="2";(12,12)*+{\text{\scriptsize{$1$}}}="3";
(12,0)*+{\text{\scriptsize{$3$}}}="6";(12,-12)*+{\text{\scriptsize{$1$}}}="9";(0,-12)*+{\text{\scriptsize{$2$}}}="8";
(-12,-12)*+{\text{\scriptsize{$1$}}}="7";(-12,0)*+{\text{\scriptsize{$3$}}}="4";
(0,6)*{\cdot}="11";(-6,6)*{\cdot}="10";(-6,0)*{\cdot}="12";
{\ar@{->}@[red]"1";"2"};{\ar@{->}@[red]"2";"3"};{\ar@{->}@[red]"3";"5"};{\ar@{->}@[red]"5";"7"};{\ar@{->}@[red]"7";"4"};{\ar@{->}@[red]"4";"1"};
{\ar@{->}"6";"3"};{\ar@{->}"9";"6"};{\ar@{->}"8";"5"};{\ar@{->}"4";"12"};{\ar@{->}"5";"6"};{\ar@{->}"7";"8"};
{\ar@{->}"8";"9"};{\ar@{->}"5";"11"};{\ar@{->}"2";"10"};
{\ar@{->}@[green]"10";"4"};{\ar@{..>}"10";"4"};{\ar@{->}@[green]"11";"2"};{\ar@{..>}"11";"2"};{\ar@{->}@[green]"12";"5"};{\ar@{..>}"12";"5"};{\ar"6";"8"};
\endxy
& \ \ \stackrel{\psi}{\longrightarrow} \ \ &
\xy
(0,0)*{\cdot}="5";(-12,12)*+{\text{\scriptsize{$1$}}}="1";(0,12)*+{\text{\scriptsize{$2$}}}="2";(12,12)*+{\text{\scriptsize{$1$}}}="3";
(12,0)*+{\text{\scriptsize{$3$}}}="6";(12,-12)*+{\text{\scriptsize{$1$}}}="9";(0,-12)*+{\text{\scriptsize{$2$}}}="8";
(-12,-12)*+{\text{\scriptsize{$1$}}}="7";(-12,0)*+{\text{\scriptsize{$3$}}}="4";
{\ar@{->}"1";"2"};{\ar@{->}"2";"3"};{\ar@{->}"3";"5"};{\ar@{->}"5";"7"};{\ar@{->}"7";"4"};{\ar@{->}"4";"1"};
{\ar@{->}"6";"3"};{\ar@{->}"9";"6"};{\ar@{->}"8";"5"};{\ar@{->}"4";"5"};{\ar@{->}"5";"6"};{\ar@{->}"7";"8"};
{\ar@{->}"8";"9"};{\ar@{->}"5";"2"};{\ar@{->}"2";"4"};{\ar"6";"8"};
\endxy
& Q'
\end{array}
\end{array}
\end{equation*}
\caption{Examples for Remarks \ref{first remark} and Proposition \ref{R is not normal}.
The quivers are drawn on a torus, the contracted arrows are drawn in green, and the 2-cycles have been removed from $Q'$.
In each example, the cycle in $Q$ formed from the red arrows is not equal to a product of unit cycles (modulo $I$).
However, in example (i) this cycle is mapped to a unit cycle in $Q'$ under $\psi$.}
\label{holy smokes batman}
\end{figure}

\begin{Remark} \label{first remark} \rm{
Let $p^+$ be a cycle in $Q^+$; then $\overbar{p} = \sigma^m$ for some $m \geq 0$ by Lemma \ref{cyclelemma}.1.
However, $p$ may not necessarily equal a power of the unit cycle $\sigma_{\operatorname{t}(p)}$ (modulo $I$).
Two examples are given by the red cycles in Figures \ref{holy smokes batman}.i and \ref{holy smokes batman}.ii.

Furthermore, it is possible for two cycles in $Q^+$, distinct modulo $I$, and one of which is properly contained in the region bounded by the other, to have equal $\bar{\tau}_{\psi}$-images.
Indeed, consider Figure \ref{holy smokes batman}.i: the red cycle and the unit cycle in its interior both have $\bar{\tau}_{\psi}$-image $\sigma$.
}\end{Remark}

\section{The central nilradical from cyclic contractions}

Let $A = kQ/I$ be a dimer algebra with center $Z$.
In this section we will show that the nilpotent elements in $Z$ are precisely the central elements that vanish under a cyclic contraction.
We regard this as \textit{the} fundamental result in the structure of nonnoetherian dimer algebras on a torus. 

Noetherian dimer algebras are prime \cite[Theorem 4.31, Corollary 5.12]{B2}, and therefore their centers are reduced.
In the following two examples we show that nonnoetherian dimer algebras may have non-reduced centers, and consequently that dimer algebras need not be prime.

\begin{Example} \label{first example} \rm{
Consider the nonnoetherian dimer algebra $A$ with quiver $Q$ given in Figure \ref{non-cancellative central}.
(A cyclic contraction of $A$ is given in Figure \ref{deformation figure0}.)
The paths $p$, $q$, $a$ satisfy
\begin{equation*}
z := (p-q)a + a(p-q) \in \operatorname{nil}Z.
\end{equation*}
In particular, $\operatorname{nil}Z \not = 0$.
$A$ is therefore not prime since
\begin{equation*}
zAz = z^2A = 0.
\end{equation*}
We note that $A$ also contains non-central elements $a,b$ with the property that $aAb = 0$; for example, $(p-q)Ae_1 = 0$.
}\end{Example}

\begin{Example} \label{second example} \rm{
Let $Q$ be a dimer quiver containing the subquiver given in Figure \ref{z22=0}.
Given any cyclic contraction $\psi: A \to A'$, the $\psi$-image of the cycle $st$ is a unit cycle in $Q'$.
Set $p := cbtba$.
Then
\begin{equation*}
p + \sum_{j \in Q_0 \setminus \{i\}} \sigma_j^2 \ \ \ \text{ and } \ \ \ z := p - \sigma_i^2
\end{equation*}
are nonzero central elements of $A$, by Lemma \ref{unit cycle lemma}.
Furthermore, $z^2 = 0$, and so $z$ is in the central nilradical of $A$.
}\end{Example}

\begin{figure}
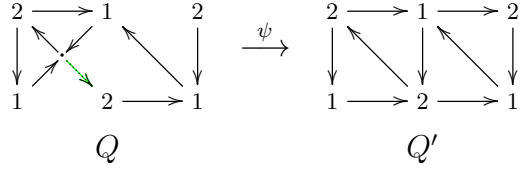

\begin{equation*}
\begin{array}{ccc}
\xy
(-12,6)*+{\text{\scriptsize{$2$}}}="1";(0,6)*+{\text{\scriptsize{$1$}}}="2";(12,6)*+{\text{\scriptsize{$2$}}}="3";
(-12,-6)*+{\text{\scriptsize{$1$}}}="4";(0,-6)*+{\text{\scriptsize{$2$}}}="5";(12,-6)*+{\text{\scriptsize{$1$}}}="6";
(-6,0)*{\cdot}="7";
{\ar^{}"1";"4"};{\ar^{}"4";"7"};{\ar^{}"7";"1"};{\ar^{}"1";"2"};{\ar^{}"2";"7"};{\ar^{}"5";"6"};{\ar^{}"6";"2"};{\ar^{}"3";"6"};
{\ar@[green]"7";"5"};{\ar@{..>}"7";"5"};\endxy
 & \stackrel{\psi}{\longrightarrow} &
\xy
(-12,6)*+{\text{\scriptsize{$2$}}}="1";(0,6)*+{\text{\scriptsize{$1$}}}="2";(12,6)*+{\text{\scriptsize{$2$}}}="3";
(-12,-6)*+{\text{\scriptsize{$1$}}}="4";(0,-6)*+{\text{\scriptsize{$2$}}}="5";(12,-6)*+{\text{\scriptsize{$1$}}}="6";
{\ar^{}"1";"4"};{\ar^{}"4";"5"};{\ar^{}"5";"1"};{\ar^{}"1";"2"};{\ar^{}"2";"5"};{\ar^{}"5";"6"};{\ar^{}"6";"2"};{\ar^{}"2";"3"};{\ar^{}"3";"6"};
\endxy
\\
Q & \ \ \ \ & Q' \\
\end{array}
\end{equation*}
\caption{The nonnoetherian dimer algebra $A = kQ/I$ cyclically contracts to the noetherian dimer algebra $A' = kQ'/I'$.
Both quivers are drawn on a torus and the contracted arrow is drawn in green.
Here, the cycle algebra of $A$ is $S = k\left[ x^2, y^2, xy, z \right] \subset k[\mathcal{S}'] = k[x,y,z]$, and the ghor center of $A$ is $R = k + (x^2, y^2, xy)S$.}
\label{deformation figure0}
\end{figure}

\begin{figure}
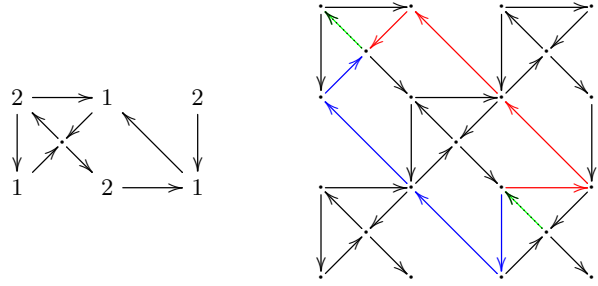

\begin{equation*}
\xy
(-12,6)*+{\text{\scriptsize{$2$}}}="1";(0,6)*+{\text{\scriptsize{$1$}}}="2";(12,6)*+{\text{\scriptsize{$2$}}}="3";
(-12,-6)*+{\text{\scriptsize{$1$}}}="4";(0,-6)*+{\text{\scriptsize{$2$}}}="5";(12,-6)*+{\text{\scriptsize{$1$}}}="6";
(-6,0)*{\cdot}="7";
{\ar^{}"1";"4"};{\ar^{}"4";"7"};{\ar^{}"7";"1"};{\ar^{}"1";"2"};{\ar^{}"2";"7"};{\ar^{}"5";"6"};{\ar^{}"6";"2"};{\ar^{}"3";"6"};
{\ar"7";"5"};
\endxy \ \ \ \ \ \ \ \ \ \
\xy
(-18,18)*{\cdot}="1";(-6,18)*{\cdot}="2";(6,18)*{\cdot}="3";(18,18)*{\cdot}="4";
(-12,12)*{\cdot}="5";(12,12)*{\cdot}="6";
(-18,6)*{\cdot}="7";(-6,6)*{\cdot}="8";(6,6)*{\cdot}="9";(18,6)*{\cdot}="10";
(0,0)*{\cdot}="11";
(-18,-6)*{\cdot}="12";(-6,-6)*{\cdot}="13";(6,-6)*{\cdot}="14";(18,-6)*{\cdot}="15";
(-12,-12)*{\cdot}="16";(12,-12)*{\cdot}="17";
(-18,-18)*{\cdot}="18";(-6,-18)*{\cdot}="19";(6,-18)*{\cdot}="20";(18,-18)*{\cdot}="21";
{\ar"1";"2"};{\ar"3";"4"};{\ar"1";"7"};{\ar@[teal]|-a"5";"1"};{\ar"5";"8"};{\ar"3";"9"};{\ar"6";"3"};{\ar"6";"10"};{\ar"4";"6"};{\ar"9";"6"};
{\ar"8";"9"};{\ar"8";"13"};{\ar"13";"11"};{\ar"11";"8"};{\ar"11";"14"};{\ar"10";"15"};
{\ar"12";"13"};{\ar"12";"18"};{\ar"18";"16"};{\ar"16";"12"};{\ar"16";"19"};{\ar"13";"16"};{\ar"20";"17"};{\ar"15";"17"};{\ar"17";"21"};
{\ar@[red]"14";"15"};{\ar@[red]"15";"9"};{\ar@[red]"9";"2"};{\ar@[red]"2";"5"};{\ar@[teal]|-a"17";"14"};{\ar@[blue]"14";"20"};{\ar@[blue]"20";"13"};{\ar@[blue]"13";"7"};{\ar@[blue]"7";"5"};
{\ar"9";"11"};
\endxy
\end{equation*}
\caption{The dimer algebra $A$ given in Figure \ref{deformation figure0} has a non-vanishing central nilradical, $\operatorname{nil}Z \not = 0$.
A fundamental domain of $Q$ is shown on the left and a larger region of $Q^+$ is shown on the right.
The paths $p$, $q$, $a$ are drawn in red, blue, and teal respectively.
The element $(p-q)a + a(p-q)$ is central and squares to zero.}
\label{non-cancellative central}
\end{figure}

\begin{figure}
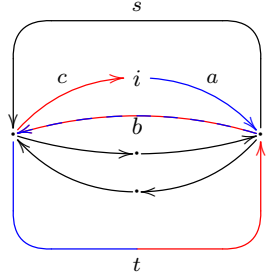

\begin{equation*}
\xy 0;/r.6pc/:
(-6.5,0)*{\cdot}="1";(-6.5,4)*{}="2";
(-4.5,6)*{}="3";(4.5,6)*{}="4";
(6.5,4)*{}="5";(6.5,0)*{\cdot}="6";(6.5,-4)*{}="7";
(4.5,-6)*{}="8";(-4.5,-6)*{}="9";
(-6.5,-4)*{}="10";
(0,3)*+{\text{\scriptsize{$i$}}}="11";(0,1)*{}="12";(0,-1)*{\cdot}="13";(0,-3)*{\cdot}="14";
(0,-6)*{}="15";
(0,0.5)*{\text{{\scriptsize $b$}}}="";
{\ar@{<-}"1";"2"};{\ar@{-}@/^.45pc/"2";"3"};{\ar@{-}^s"3";"4"};{\ar@{-}@/^.45pc/"4";"5"};{\ar@{-}"5";"6"};
{\ar@{<-}"6";"7"};{\ar@{-}@/^.45pc/"7";"8"};{\ar@{-}"8";"15"};{\ar@{-}"9";"15"};{\ar@{-}@/^.45pc/"9";"10"};{\ar@{-}"10";"1"};
{\ar@{}^t"8";"9"};
{\ar@/^/^c"1";"11"};{\ar@/^/^a"11";"6"};
{\ar@/^/"6";"14"};{\ar@/^/"14";"1"};
{\ar@{-}@/_.15pc/"6";"12"};{\ar@/_.15pc/"12";"1"};
{\ar@/_.15pc/"13";"6"};{\ar@/_.15pc/"1";"13"};
\endxy
\end{equation*}
\caption{The subquiver of $Q$ in Example \ref{second example}.
The paths $a$ and $c$ are arrows in $Q$, and all other paths are paths of positive length.
Setting $p := cbtba$, the elements $p + \sum_{j \in Q_0 \setminus \{i\}} \sigma^2_j$ and $z := p - \sigma^2_i$ are in the center of $A$.
Furthermore, $z^2 = 0$, and so $A$ has a nonvanishing central nilradical.}
\label{z22=0}
\end{figure}

\begin{figure}
\begin{equation*}
\begin{array}{rl}
\text{\footnotesize{$(a.i)$}} & \xy 0;/r.4pc/:
(-30,0)*{\cdot}="2";(10,0)*{\cdot}="4";
(-25,-5)*{}="6";(-5,-5)*{\cdot}="7";(15,-5)*{}="8";(35,-5)*{\cdot}="9";
(-25,5)*{\cdot}="10";(-5,5)*{}="11";(15,5)*{\cdot}="12";(35,5)*{}="13";
(-40,0)*+{\text{\scriptsize{$i$}}}="1";(0,0)*+{\text{\scriptsize{$i$}}}="3";(40,0)*+{\text{\scriptsize{$i$}}}="5";
{\ar^r"1";"2"};{\ar^r"3";"4"};
{\ar@[red]@{-}"2";"6"};{\ar@[red]@/_1pc/^{p''}"6";"7"};{\ar@[red]_a"7";"3"};
{\ar@[red]@{-}"4";"8"};{\ar@[red]@/_1pc/^{p''}"8";"9"};{\ar@[red]_a"9";"5"};
{\ar@[blue]^{b}"2";"10"};{\ar@{-}@[blue]@/^1pc/_{q''}"10";"11"};{\ar@[blue]"11";"3"};
{\ar@[blue]^{b}"4";"12"};{\ar@{-}@[blue]@/^1pc/_{q''}"12";"13"};{\ar@[blue]"13";"5"};
\endxy
\\
& \\
\text{\footnotesize{$(a.ii)$}} &
\xy 0;/r.4pc/:
(-35,0)*{\cdot}="17";(-20,0)*{\cdot}="2";(-5,0)*{\cdot}="3";(5,0)*{\cdot}="5";(20,0)*{\cdot}="6";(35,0)*{\cdot}="7";
(-40,5)*{\cdot}="9";(0,5)*{\cdot}="11";(0,-5)*{\cdot}="14";(40,-5)*{\cdot}="16";
(-20,5)*{}="10";(20,5)*{}="12";(-20,-5)*{}="13";(20,-5)*{}="15";
(-40,0)*+{\text{\scriptsize{$i$}}}="1";(0,0)*+{\text{\scriptsize{$i$}}}="4";(40,0)*+{\text{\scriptsize{$i$}}}="8";
{\ar@[blue]_{\beta'}"1";"17"};{\ar@[blue]_{q'}"17";"2"};{\ar@[blue]"2";"3"};{\ar@[blue]_{\beta}"3";"4"};
{\ar@[blue]^{\beta'}"4";"5"};{\ar@[blue]_{q'}"5";"6"};{\ar@[blue]"6";"7"};{\ar@[blue]_{\beta}"7";"8"};
{\ar@[red]_{\alpha}"1";"9"};
{\ar@/^1pc/@{-}@[red]_{p'}"9";"10"};{\ar@[red]"10";"2"};
{\ar@{-}@[red]"2";"13"};
{\ar@/_1pc/@[red]"13";"14"};{\ar@[red]^{\alpha'}"14";"4"};
{\ar@[red]_{\alpha}"4";"11"};
{\ar@{-}@/^1pc/@[red]_{p'}"11";"12"};{\ar@[red]"12";"6"};
{\ar@{-}@[red]"6";"15"};
{\ar@/_1pc/@[red]"15";"16"};{\ar@[red]^{\alpha'}"16";"8"};
{\ar@[teal]|-{c}"11";"3"};{\ar@[teal]|-{c'}"5";"14"};
{\ar@/^1.5pc/^{\gamma}"3";"11"};{\ar@/_1.5pc/_{\gamma'}"14";"5"};
\endxy
\\
& \\
\text{\footnotesize{$(b.i)$}} &
\xy 0;/r.4pc/:
(-40,5)*{\cdot}="1a";(-30,5)*{\cdot}="2a";(-35,-5)*{\cdot}="3a";
(-5,5)*{\cdot}="1b";(5,5)*{\cdot}="2b";(0,-5)*{\cdot}="3b";
(40,5)*{\cdot}="2c";(30,5)*{\cdot}="1c";(35,-5)*{\cdot}="3c";
{\ar@[blue]^{q'}"2a";"1b"};{\ar@[blue]^{q'}"2b";"1c"};
{\ar^{\beta = \beta'}"1a";"2a"};{\ar^{\beta = \beta'}"1b";"2b"};{\ar^{\beta = \beta'}"1c";"2c"};
{\ar@[red]_{\alpha}"2a";"3a"};{\ar@[red]|-{\alpha' = c}"3b";"1b"};{\ar@[red]|-{\alpha = c'}"2b";"3b"};{\ar@[red]_{\alpha'}"3c";"1c"};
{\ar@[red]^{p''}"3a";"3b"};{\ar@[red]^{p''}"3b";"3c"};
{\ar@/_2.1pc/_{\gamma'}"3b";"2b"};{\ar@/_2.1pc/_{\gamma}"1b";"3b"};
\endxy
\\
& \\
\text{\footnotesize{$(b.ii)$}} &
\xy 0;/r.4pc/:
(-35,7.5)*+{\text{\scriptsize{$i$}}}="1a";(-40,2.5)*{\cdot}="2a";(-40,-2.5)*{}="4a";(-35,-7.5)*{\cdot}="6a";
(-30,7.5)*{\cdot}="7a";(-5,7.5)*{\cdot}="8a";
(0,7.5)*+{\text{\scriptsize{$i$}}}="1b";(-5,2.5)*{\cdot}="2b";(5,2.5)*{\cdot}="3b";(-5,-2.5)*{}="4b";(5,-2.5)*{}="5b";(0,-7.5)*{\cdot}="6b";
(5,7.5)*{\cdot}="7b";(30,7.5)*{\cdot}="8b";
(35,7.5)*+{\text{\scriptsize{$i$}}}="1c";(40,2.5)*{\cdot}="3c";(40,-2.5)*{}="5c";(35,-7.5)*{\cdot}="6c";
{\ar@[blue]^{\beta'}"1a";"7a"};{\ar@[blue]^{q'}"7a";"8a"};{\ar@[blue]^{\beta}"8a";"1b"};
{\ar@[blue]^{\beta'}"1b";"7b"};{\ar@[blue]^{q'}"7b";"8b"};{\ar@[blue]^{\beta}"8b";"1c"};
{\ar@[red]_{\alpha}"1a";"2a"};{\ar@[red]@{-}"2a";"4a"};{\ar@[red]"4a";"6a"};
{\ar@[red]^{p'}"6a";"6b"};
{\ar@[red]|-{\alpha}"1b";"2b"};{\ar@[red]@{-}"2b";"4b"};{\ar@[red]"4b";"6b"};{\ar@[red]@{-}"6b";"5b"};{\ar@[red]"5b";"3b"};{\ar@[red]|-{\alpha'}"3b";"1b"};
{\ar@[red]^{p'}"6b";"6c"};
{\ar@[red]@{-}"6c";"5c"};{\ar@[red]"5c";"3c"};{\ar@[red]_{\alpha'}"3c";"1c"};
{\ar@[teal]|-{c}"2b";"8a"};{\ar@[teal]|-{c'}"3b";"7b"};
{\ar@/_1.7pc/_{\gamma}"8a";"2b"};{\ar@/^1.7pc/^{\gamma'}"7b";"3b"};
\endxy
\end{array}
\end{equation*}
\caption{The different possible cases in the proof of Proposition \ref{Lucy}.
In ($a.i$), $r$ may be vertex.}
\label{nguzka}
\end{figure}

\begin{Lemma} \label{not central}
Let $i \in Q_0$, and suppose $z \in A$ is a central element for which $ze_i = p - q$.
Then
\begin{equation*}
pq = qp.
\end{equation*}
\end{Lemma}

\begin{proof}
Since $z$ is central, we have
\begin{equation*}
p^2 - pq = p(p - q) = pz = zp = (p - q)p  = p^2 - qp.
\end{equation*}
Whence $pq = qp$.
\end{proof}

\begin{Proposition} \label{Lucy}
Let $z \in Z$ and $i \in Q_0$, and suppose there is a non-cancellative pair of cycles $p,q \in e_iAe_i$ such that $ze_i = p - q$.
Then
\begin{equation*} \label{ivo}
p^2 = pq = qp = q^2.
\end{equation*}
\end{Proposition}

\begin{proof}
In the following, by `path' or `cycle' we mean a path or cycle in $Q$ (not modulo $I$). 
If $a$ is an arrow and $s,t$ are paths such that $as,at$ are unit cycles, then $s$ is called an `arc' and $t$ its `complementary arc'.

Let $p, q \in kQ$ be representative paths of $p+I, q+I \in A$.
To prove the lemma, it suffices to show that $p^2 \equiv pq$, since $qp \equiv pq$ by Lemma \ref{not central}.

If $p = \sigma_i^n$ for some $n \geq 1$, then $p^2 \equiv pq$, by Lemma \ref{unit cycle lemma}.
(Such cases exist; see Example \ref{second example}.)
So suppose $p$ is not a power of a unit cycle.

Since $qp \equiv pq$ by Lemma \ref{not central}, we may assume that the representatives $p,q$ factor into paths 
\begin{equation} \label{p q factor}
p = \alpha' p'\alpha \ \ \ \text{ and } \ \ \ q = \beta q' \beta',
\end{equation}
where $\alpha, \alpha', \beta, \beta' \in Q_{\geq 1}$ are subpaths of unit cycles and $\alpha \beta$ and $\beta' \alpha'$ are arcs.
Let $\gamma, \gamma'$ be their complementary arcs: 
\begin{equation} \label{p q factor2}
\alpha \beta \equiv \gamma \ \ \ \text{ and } \ \ \ \beta' \alpha' \equiv \gamma'.
\end{equation}

There are two main cases to consider.

($a$) First assume that $p^{2+}$ does not intersect itself.

($a.i$) Consider the setup given in Figure \ref{nguzka}.$a.i$, where $p,q$ factor into paths 
\begin{equation*}
p = ap''r \ \ \ \text{ and }  \ \ \ q = q''br,
\end{equation*} 
with $a,b \in Q_1$ and $r \in Q_{\geq 0}$.

If $rp \equiv rq$, then 
\begin{equation*}
p^2 = ap''rp \equiv ap''rq = pq,
\end{equation*} 
which is what we wanted to show.

So suppose $p^2 \not \equiv pq$; then $rp \not \equiv rq$.
Thus, by our choice of representatives $p,q$ satisfying (\ref{p q factor}), $bra$ must be a subpath of a unit cycle. 
However, it is clear from the figure that this is not possible.

($a.ii$) Since case ($i$) is not possible and $p,q$ factor into the paths (\ref{p q factor}), we have the setup shown in Figure \ref{nguzka}.$a.ii$. 
From (\ref{p q factor2}) we have
\begin{equation*}
qp \equiv \beta q' \gamma' p' \alpha \ \ \ \text{ and } \ \ \ pq \equiv \alpha' p' \gamma q' \beta'.
\end{equation*}
But $(\beta q' \gamma' p' \alpha)^+$ and $(\alpha' p' \gamma q' \beta')^+$ have moved \textit{away} from $(pq)^+$ and $(qp)^+$ respectively.
Similarly, further applications of the dimer relations $I$ only homotope $(pq)^+$ and $(qp)^+$ further away from each other.
Consequently, it is not possible that $qp \equiv pq$ in this case, a contradiction.

($b$) Now assume that $p^{2+}$ intersects itself.

($b.i$) First suppose $p,q$ share a common leftmost (or rightmost) nontrivial subpath $\beta \in Q_{\geq 1}$.
Then there are paths $p', q' \in Q_{\geq 1}$ such that 
\begin{equation*}
p_1 := p = \beta p' \ \ \ \ \text{ and } \ \ \ \ q_1 := q = \beta q'.
\end{equation*}
Set
\begin{equation*}
p_2 := p' \beta \ \ \ \ \text{ and } \ \ \ \ q_2 := q' \beta.
\end{equation*}

Let $z \in Z$ be such that $z e_{\operatorname{h}(\beta)} = p_1 - q_1 + I$.
Then, since $z \beta = \beta z$, we have $ze_{\operatorname{t}(\beta)} = p_2 - q_2 + I$.
Therefore, by Lemma \ref{not central},
\begin{equation*}
p_1q_1 \equiv q_1p_1 \ \ \ \ \text{ and } \ \ \ \ p_2q_2 \equiv q_2p_2.
\end{equation*}
It thus suffices to assume that $p'$ factors into paths $p' = \alpha' p'' \alpha$, that is, 
\begin{equation*}
p_1 = \beta \alpha' p'' \alpha \ \ \ \ \text{ and } \ \ \ \ p_2 = \alpha' p'' \alpha \beta,
\end{equation*}
where $\alpha \beta$ is an arc subpath of $p_1q_1$ and $\beta \alpha'$ is an arc subpath of $q_2p_2$.

Since $\alpha \beta$ and $\beta \alpha'$ are both arcs, $\alpha \beta \alpha'$ must be a unit cycle with $\alpha, \alpha'$ arrows.
We therefore have the setup shown in Figure \ref{nguzka}.b.i.
Here, $\gamma, \alpha \beta$ are complementary arcs, and $\gamma', \beta \alpha'$ are complementary arcs. 

In order to homotope the path $p_1q_1$ to $q_1p_1$, we first use the relation $\alpha \beta \equiv \gamma$.
Continuing, we obtain
\begin{equation*}
p_1q_1 = (\beta \alpha' p'' \alpha) (\beta q') \equiv \beta \alpha' p'' \gamma q' \equiv \beta \alpha' p'' p'' \alpha \sigma^{\ell}_{\operatorname{h}(\beta)}
\end{equation*}
for some $\ell \geq 0$, by Lemma --.
But $\overbar{p}_1 = \overbar{q}_1$ since $p_1,q_1$ is a non-cancellative pair.
Whence, $\ell = 1$.
Therefore,
\begin{equation*}
pq = p_1q_1 \equiv \beta \alpha' p'' p'' \alpha \sigma_{\operatorname{h}(\beta)} \equiv \beta \alpha' p'' \sigma_{\operatorname{h}(p'')} p'' \alpha = \beta \alpha' p'' (\alpha \beta \alpha') p'' \alpha = p_1^2 = p^2,
\end{equation*}
which is what we wanted to show.
Similarly, $p_2q_2 \equiv p_2^2$.

($b.ii$) Finally, suppose $p,q$ do not share a common leftmost or rightmost nontrivial subpath. 
Then, since $p,q$ factor into the paths (\ref{p q factor}), we have the setup shown in Figure \ref{nguzka}.b.ii.
(Although not drawn, $p^{2+}$ and $q^{2+}$ may intersect themselves multiple times.)

Factor $p$ into arrow subpaths, $p = a_n \cdots a_2a_1$, $a_j \in Q_1$.
Denote by 
\begin{equation*}
p_j := a_{j-1} \cdots a_{j+1}a_j
\end{equation*}
the cyclic permutation of $p$ starting with arrow $a_j$.
Since $z \in Z$ is central and the relations $I$ are generated by binomials in paths of $Q$, for each $j \in [1,m]$ there are cycles $p'_j$, $q_j$ at $\operatorname{h}(a_{j-1})$ for which
\begin{multline*}
(p'_j - q_j)(a_{j-1} \cdots a_2a_1) = z (a_{j-1} \cdots a_1)\\
\equiv (a_{j-1} \cdots a_1) z = (a_{j-1} \cdots a_1) (p-q) = p_j(a_{j-1} \cdots a_1) - (a_{j-1} \cdots a_1)q.
\end{multline*}
Upon setting $p'_j (a_{j-1} \cdots a_1) = p_j (a_{j-1} \cdots a_1)$, we have
\begin{equation} \label{qa = aq}
q_j(a_{j-1} \cdots a_1) \equiv (a_{j-1} \cdots a_1)q.
\end{equation}
In particular, without loss of generality we may assume
\begin{equation} \label{wlog}
ze_{\operatorname{t}(p_j)} = p_j - q_j + I.
\end{equation}
Whence,
\begin{equation} \label{cc}
q_{j+1}a_j \equiv a_jq_j
\end{equation}
since $p_{j+1}a_j = a_j \cdots a_{j+1} a_j = a_jp_j$ and
\begin{equation*}
(p_{j+1} - q_{j+1})a_j = za_j \equiv a_jz = a_j(p_j - q_j).
\end{equation*}

Suppose $p_j \equiv q_j$ for some $j \in [1,m]$.
Then (\ref{qa = aq}) implies 
\begin{multline*}
pq = (a_m \cdots a_j)(a_{j-1} \cdots a_1)q \equiv (a_m \cdots a_j)q_j(a_{j-1} \cdots a_1)\\ \equiv (a_m \cdots a_j)p_j(a_{j-1} \cdots a_1) = p^2,
\end{multline*}
which is what we wanted to show.

So suppose $p_j \not \equiv q_j$ for each $j \in [1,m]$.
Then $p_j, q_j$ is a non-cancellative pair: $p,q$ is a non-cancellative pair, so there is a path $r$ such that $(p-q)r \equiv 0$, and therefore
\begin{equation*}
(p_j - q_j) (a_{j-1} \cdots a_1) r \equiv (a_{j-1} \cdots a_1) (p-q)r \equiv 0.
\end{equation*}
We may assume that $q_j$ is a representative of $q_j+I$ for which the region $\mathcal{R}_{p_j,q_j}$ contains a minimal number of unit cycles.

By assumption, there are minimal indices $1 \leq k < \ell \leq n$ such that $\operatorname{t}(a_k) = \operatorname{h}(a_{\ell})$.
Factor $q_j$ into paths $q_j = \beta_j q'_j$, where $\beta_j$ is the maximal leftmost subpath of $q_j$ that is a subpath of a unit cycle.
Then, by the minimality of $\mathcal{R}_{p_j,q_j}$ and the minimality of the index $k$, (\ref{cc}) implies that $a_j\beta_j$ is an arc for $j \in [1, k-1]$.
This is shown in Figure \ref{nguzka2}, where each $c_j \in Q_1$ is an arrow and $c_j a_j \beta_j$ is a unit cycle.
Observe that the complementary arc to $(a_j \beta_j)^+$ lies in the region $\mathcal{R}_{p_j,q_j}$.

Since $a_j$ is a rightmost arrow subpath of $p_j$ and $ze_{\operatorname{t}(p_j)} = p_j - q_j + I$ by (\ref{wlog}), we may assume that $a_j$ is not a rightmost arrow subpath of any representative of $q_j+I$ since otherwise we have case (b.i) with $p_j,q_j$ in place of $p,q$.
In particular, $q_j$ does not have a unit cycle subpath modulo $I$, by Lemma \ref{unit cycle lemma}. 

Now consider (\ref{cc}) with $j = k$:
\begin{equation*} \label{needs unit cycle}
a_k q_k \equiv q_{k+1}a_k.
\end{equation*}
As we have just shown, the arrow $a_k$ is not a rightmost subpath of $q_k$ modulo $I$.
But it is clear from Figure \ref{nguzka2} that $(a_k \beta_k)^+$ cannot be an arc whose complementary arc lies in $\mathcal{R}_{p_k,q_k}$.
Therefore, by the minimality of $\mathcal{R}_{p_k,q_k}$, we have $a_k q_k \not \equiv s a_k$ for all paths $s$.
Consequently $ze_{\operatorname{t}(p_k)} \not = p_k - q_k + I$, contrary to (\ref{wlog}).
\end{proof}

\begin{figure}
\begin{equation*}
\xy 0;/r.47pc/:
(-30,12.5)*+{\text{\scriptsize{$i$}}}="1a";(-35,7.5)*{\cdot}="2a";(-25,7.5)*{\cdot}="3a";(-35,2.5)*{\cdot}="4a";(-25,2.5)*{\cdot}="5a";
(-30,-12.5)*{\cdot}="10a";(-35,-7.5)*{\cdot}="8a";(-25,-7.5)*{\cdot}="9a";(-35,-2.5)*{\cdot}="6a";(-25,-2.5)*{\cdot}="7a";
(0,12.5)*+{\text{\scriptsize{$i$}}}="1b";(-5,7.5)*{\cdot}="2b";(5,7.5)*{\cdot}="3b";(-5,2.5)*{\cdot}="4b";(5,2.5)*{\cdot}="5b";
(0,-12.5)*{\cdot}="10b";(-5,-7.5)*{\cdot}="8b";(5,-7.5)*{\cdot}="9b";(-5,-2.5)*{\cdot}="6b";(5,-2.5)*{\cdot}="7b";
(-5,12.5)*{\cdot}="11b";(-10,7.5)*{\cdot}="12b";(-10,2.5)*{\cdot}="13b";(-10,-2.5)*{\cdot}="14b";(-6,-9)*{\cdot}="15b";
(-15,-9)*{}="16b";(-24,-12.5)*{\cdot}="17b";(-18,-12.5)*{\cdot}="18b";(-12,-12.5)*{\cdot}="19b";(-6,-12.5)*{\cdot}="20b";
(-10,-7.5)*{\cdot}="22b";
(30,12.5)*+{\text{\scriptsize{$i$}}}="1c";(25,7.5)*{\cdot}="2c";(35,7.5)*{\cdot}="3c";(25,2.5)*{\cdot}="4c";(35,2.5)*{\cdot}="5c";
(30,-12.5)*{\cdot}="10c";(25,-7.5)*{\cdot}="8c";(35,-7.5)*{\cdot}="9c";(25,-2.5)*{\cdot}="6c";(35,-2.5)*{\cdot}="7c";
(24,-12.5)*{\cdot}="20c";(18,-12.5)*{\cdot}="19c";(12,-12.5)*{\cdot}="18c";(6,-12.5)*{\cdot}="17c";
(-30,-5)*{\vdots}="";(-15,-5)*{\vdots}="";
{\ar@[blue]^{q'_1}"1a";"11b"};
{\ar@[blue]^{q=q_1=\beta_1 q'_1}"1b";"1c"};
{\ar@[red]|-{a_1}"1a";"2a"};{\ar@[red]|-{a_2}"2a";"4a"};{\ar@[red]|-{a_3}"4a";"6a"};{\ar@[red]"6a";"8a"};{\ar@[red]|-{a_{k-1}}"8a";"10a"};
{\ar@[red]"10a";"9a"};{\ar@[red]"9a";"7a"};{\ar@[red]"7a";"5a"};{\ar@[red]"5a";"3a"};{\ar@[red]|-{a_n}"3a";"1a"};
{\ar@[red]|-{a_1}"1b";"2b"};{\ar@[red]|-{a_2}"2b";"4b"};{\ar@[red]|-{a_3}"4b";"6b"};{\ar@[red]"6b";"8b"};{\ar@[red]"8b";"10b"};(-1.9,-8.9)*{\text{\scriptsize{$a_{k-1}$}}}="";
{\ar@[red]|-{a_{\ell+1}}"10b";"9b"};{\ar@[red]"9b";"7b"};{\ar@[red]"7b";"5b"};{\ar@[red]"5b";"3b"};{\ar@[red]|-{a_n}"3b";"1b"};
{\ar@[red]|-{a_1}"1c";"2c"};{\ar@[red]|-{a_2}"2c";"4c"};{\ar@[red]|-{a_3}"4c";"6c"};{\ar@[red]"6c";"8c"};{\ar@[red]|-{a_{k-1}}"8c";"10c"};
{\ar@[red]|-{a_{\ell+1}}"10c";"9c"};{\ar@[red]"9c";"7c"};{\ar@[red]"7c";"5c"};{\ar@[red]"5c";"3c"};{\ar@[red]|-{a_n}"3c";"1c"};
{\ar@[blue]"2a";"3a"};{\ar@[blue]"4a";"5a"};{\ar@[blue]"6a";"7a"};{\ar@[blue]"8a";"9a"};
{\ar@[blue]^{\beta_1}"11b";"1b"};{\ar@[blue]|-{\beta_2}"12b";"2b"};{\ar@[blue]|-{\beta_3}"13b";"4b"};{\ar@[blue]|-{\beta_4}"14b";"6b"};{\ar@[blue]|-{\beta_{k-1}}"22b";"8b"};
{\ar@[blue]|-{q'_2}"3a";"12b"};{\ar@[blue]|-{q'_3}"5a";"13b"};{\ar@[blue]|-{q'_4}"7a";"14b"};{\ar@[blue]|-{q'_{k-1}}"9a";"22b"};
{\ar@[blue]@{-}@/^.25pc/"18b";"16b"};{\ar@[blue]|-{q''_k}"16b";"15b"};{\ar@[blue]@/^.15pc/"15b";"10b"};(-5,-10.2)*{\text{\scriptsize{$\beta_k$}}}="";
(-18.5,-11)*{\text{\scriptsize{$q'_k$}}}="";
{\ar@[red]_{a_k}"10a";"17b"};{\ar@[red]_{\cdots}"17b";"18b"};{\ar@[red]"18b";"19b"};{\ar@[red]_{\cdots}"19b";"20b"};{\ar@[red]_{a_{\ell}}"20b";"10b"};
{\ar@[red]_{a_k}"10b";"17c"};{\ar@[red]_{\cdots}"17c";"18c"};{\ar@[red]"18c";"19c"};{\ar@[red]_{\cdots}"19c";"20c"};{\ar@[red]_{a_{\ell}}"20c";"10c"};
{\ar@[teal]|-{c_1}"2b";"11b"};{\ar@[teal]|-{c_2}"4b";"12b"};{\ar@[teal]|-{c_3}"6b";"13b"};
{\ar@{..>}@/_1pc/"13b";"6b"};{\ar@{..>}@/_1pc/"12b";"4b"};
{\ar@{..>}@/_2pc/|-{\gamma}"11b";"2b"};
\endxy
\end{equation*}
\caption{Case ($b.ii$) in the proof of Proposition \ref{Lucy}.}
\label{nguzka2}
\end{figure}

\begin{Theorem} \label{nil}
Let $A$ be a nonnoetherian dimer algebra with center $Z$ and $\psi: A \to A'$ a cyclic contraction.
Then
\begin{equation*}
Z \cap \operatorname{ker} \psi = \operatorname{nil}Z.
\end{equation*}
\end{Theorem}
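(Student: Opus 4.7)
The plan is to handle the two inclusions separately. For $\operatorname{nil}Z \subseteq Z \cap \operatorname{ker}\psi$: if $z \in \operatorname{nil}Z$, then $\psi(z)$ is nilpotent in $A'$. Since every arrow of $Q'$ arises from an arrow of $Q$, $\psi$ is a surjective $k$-algebra homomorphism, so $\psi(z)$ is central in $A'$. By \cite[Theorem 1.1]{B6}, the center of the cancellative algebra $A'$ is isomorphic to the cycle algebra $S$, which embeds into the polynomial ring $B$ and is therefore an integral domain. Hence $\psi(z) = 0$ and $z \in \operatorname{ker}\psi$.

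For the converse, fix $z \in Z \cap \operatorname{ker}\psi$. At each vertex $i \in Q_0$ I write $ze_i = \sum_k \alpha_k p_k$ as a $k$-linear combination of distinct cycles at $i$. The injectivity of $\bar{\tau}: e_{\bar i}A'e_{\bar i} \to B$ from Lemma \ref{from B}(2), applied to $\psi(ze_i) = 0$, forces $\sum_k \alpha_k \overbar{p}_k = 0$ in $B$; equivalently, within each $\bar\tau$-monomial-class the coefficients sum to zero. Grouping by $\bar\tau$-image, write $z = \sum_m z^{(m)}$, where $z^{(m)}$ collects the terms with $\overbar{p}_k = m$. Using the centrality of $z$, the multiplicativity $\overbar{ap} = \bar a\, \overbar{p}$ from Lemma \ref{from B}(1), and the cancellation $\bar a m = \bar a m' \Rightarrow m = m'$ in the polynomial ring $B$, each $z^{(m)}$ is itself central, and each lies in $\operatorname{ker}\psi$ by construction. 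Since $Z$ is commutative and a sum of commuting nilpotents is nilpotent, it suffices to prove each $z^{(m)}$ is nilpotent.

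Fix $m$ and set $w := z^{(m)}$. At each vertex $i$ the cycles in the support of $we_i$ all share the $\bar\tau$-image $m$, and by Lemma \ref{r in T'2} they lie in a common class $\mathcal{C}^u_i$. The plan is to apply Lemma \ref{Lucy} when $u \neq 0$ (and Lemma \ref{Tea} when $u = 0$) to each pair $(p_k, p_{k_0})$ appearing in $we_i$, obtaining $p_k p_{k_0} = p_{k_0}^2$, and from successive applications $p_k p_l = p_{k_0}^2$ for every $k, l$ in the support. Then
$$(we_i)^2 = \sum_{k,l} \alpha_k \alpha_l\, p_k p_l = \Bigl(\sum_k \alpha_k\Bigr)^2 p_{k_0}^2 = 0,$$
and summing over $i$ gives $w^2 = 0$. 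Consequently $z = \sum_m z^{(m)}$ is a sum of commuting square-zero central elements, so $z$ is nilpotent.

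The principal obstacle is that Lemmas \ref{Lucy} and \ref{Tea} are stated only for the two-term form $ze_i = p - q$, whereas $we_i$ may genuinely be a multi-term combination. I would address this by constructing, for each pair $(p_k, p_{k_0})$ of cycles at $i$ with $\overbar{p}_k = \overbar{p}_{k_0}$, an auxiliary central element $w_{k, k_0} \in Z$ with $w_{k, k_0}\, e_i = p_k - p_{k_0}$, propagating the difference to the remaining vertices using nondegeneracy and the centrality of $\sum_j \sigma_j$ from Lemma \ref{from B}(3); Lemmas \ref{Lucy} and \ref{Tea} then apply to each $w_{k, k_0}$ and supply the required pairwise identities. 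A parallel route would be to generalize the minimal-region/cyclic-permutation argument in the proofs of Lemmas \ref{Lucy} and \ref{Tea} directly to multi-term linear combinations $we_i = \sum_k \alpha_k p_k$ with $\sum_k \alpha_k = 0$.
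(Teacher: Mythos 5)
Your proposal has two genuine gaps, one fatal and one acknowledged but unresolved.

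For the inclusion $\operatorname{nil}Z \subseteq Z \cap \operatorname{ker}\psi$, you rely on $\psi$ being a surjective $k$-algebra homomorphism to conclude that $\psi(z)$ is nilpotent and central in $A'$. But $\psi$ is not a ring homomorphism: the contraction identifies distinct vertices $i \neq i'$ with $\psi(i) = \psi(i')$, so $\psi(e_i e_{i'}) = \psi(0) = 0$ while $\psi(e_i)\psi(e_{i'}) = e_{\psi(i)} \neq 0$, and $\psi(1_A) = \sum_{j \in Q'_0} c_j e_j \neq 1_{A'}$ whenever some fiber $\psi^{-1}(j) \cap Q_0$ has more than one element, which necessarily occurs since the contraction is nontrivial. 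The paper's own footnote to Theorem \ref{subalgebra} warns that $\psi(Z) \not\subseteq Z'$ in general. Consequently neither $\psi(z^n) = \psi(z)^n$ nor the centrality of $\psi(z)$ is available, and your argument collapses. The paper avoids this entirely: it applies $\bar{\tau}_{\psi}$, which \emph{is} an algebra homomorphism on each corner ring $e_iAe_i$, to get $\bar{\tau}_{\psi}(ze_i)^n = 0$ in the domain $B$, hence $\bar{\tau}_{\psi}(ze_i) = 0$, and then uses injectivity of $\bar{\tau}$ (Lemma \ref{from B}.2) to conclude $\psi(ze_i) = 0$ for every $i$.

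For the inclusion $Z \cap \operatorname{ker}\psi \subseteq \operatorname{nil}Z$, your decomposition $z = \sum_m z^{(m)}$ by $\bar{\tau}_{\psi}$-monomial class is a reasonable and defensible reorganization (granting the minor point that $\psi(z) = 0$ gives $\psi(ze_i) = 0$ for each $i$, which itself uses centrality of $z$ and cancellativity of $A'$ as in the proof of Theorem \ref{subalgebra}), and each $z^{(m)}$ is indeed central since cycles with distinct $\bar{\tau}_{\psi}$-images are $k$-linearly independent in each $e_jAe_i$. But the critical step — applying Lemmas \ref{Lucy} and \ref{Tea} to each pair $(p_k, p_{k_0})$ in the support of $z^{(m)}e_i$ — is exactly where you stop. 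Both lemmas require the existence of a central element $z'$ with $z'e_i$ equal to the \emph{specific} two-term difference $p_k - p_{k_0}$; their proofs lean heavily on centrality (e.g.\ $zs_j = s_jz$ for cyclic permutations $s_j$), so you cannot simply extract a pair from a longer sum. Your proposed fix (constructing an auxiliary central element $w_{k,k_0}$ with $w_{k,k_0}e_i = p_k - p_{k_0}$, or generalizing the minimal-region argument to arbitrary combinations with vanishing coefficient sum) correctly names what would have to be done, but neither route is carried out, and the first in particular is not obviously achievable: there is no reason offered why such a central $w_{k,k_0}$ should exist. The paper reduces to the case $ze_i = p_i - q_i$ by appealing to the structure of $\operatorname{ker}\psi$ (that $\psi$ sends paths to paths and $I'$ is generated by path differences), which is terse but sidesteps constructing auxiliary central elements. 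As written, your argument for this direction does not close.
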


\begin{proof}
(i) We first claim that if $z \in Z \cap \operatorname{ker}\psi$, then $z^2 = 0$, and in particular $z \in \operatorname{nil}Z$.

Consider a central element $z$ in $\operatorname{ker} \psi$.
Since $z$ is central it commutes with the vertex idempotents, and so $z$ is a $k$-linear combination of cycles.
Therefore, since $\psi$ sends paths to paths and $I'$ is generated by certain differences of paths, it suffices to suppose that $z$ is of the form
\begin{equation*}
z = \sum_{i \in Q_0}(p_i - q_i),
\end{equation*}
where $p_i$, $q_i$ are cycles in $e_iAe_i$ with equal $\psi$-images modulo $I'$.
Note that there may be vertices $i \in Q_0$ for which $p_i = q_i = 0$.

By Proposition \ref{Lucy}, we have
\begin{equation*}
p_iq_i = p_ip_i = q_iq_i = q_ip_i.
\end{equation*}
Therefore
\begin{equation*}
z^2 = ( \sum_{i \in Q_0} \left( p_i - q_i \right))^2 = \sum_{i \in Q_0} \left( p_i - q_i \right)^2 = 0.
\end{equation*}

(ii) We now claim that if $z \in \operatorname{nil}Z$, then $z \in \operatorname{ker} \psi$.

Suppose $z^n = 0$.
Then for each $i \in Q_0$, we have
\begin{equation*}
\bar{\tau}_{\psi}(ze_i)^n \stackrel{\textsc{(i)}}{=} \bar{\tau}_{\psi}\left( (ze_i)^n \right) \stackrel{\textsc{(ii)}}{=} \bar{\tau}_{\psi} \left( z^n e_i \right) = 0,
\end{equation*}
where (\textsc{i}) holds since $\bar{\tau}_{\psi}$ is an algebra homomorphism on $e_iAe_i$, and (\textsc{ii}) holds since $z$ is central.
But $\bar{\tau}_{\psi}\left(e_iAe_i \right)$ is contained in the integral domain $k[\mathcal{S}']$.
Whence
\begin{equation*}
\bar{\tau}(\psi(ze_i)) = \bar{\tau}_{\psi}(ze_i) = 0.
\end{equation*}
Thus $\psi(ze_i) = 0$ since $\bar{\tau}$ is injective, by Lemma \ref{from B}.2.
Therefore
\begin{equation*}
\psi(z) \stackrel{\textsc{(i)}}{=} \psi\left(z \sum_{i\in Q_0} e_i \right) \stackrel{\textsc{(ii)}}{=} \sum_{i\in Q_0} \psi(ze_i) = 0,
\end{equation*}
where (\textsc{i}) holds since the vertex idempotents form a complete set and (\textsc{ii}) holds since $\psi$ is a $k$-linear map.
\end{proof}

\section{The central nilradical is prime}

Let $Z$ and $S$ be the center and cycle algebra of a nonnoetherian dimer algebra $A = kQ/I$, let $\psi: A \to A'$ be a cyclic contraction, and let $R \subset S$ be the center of the ghor algebra $\Lambda$ of $Q$.
In this section we will show that the reduced center $\hat{Z} := Z/\operatorname{nil}Z$ of $A$ is an integral domain.

\begin{Theorem} \label{subalgebra}
There is an exact sequence of $Z$-modules
\begin{equation} \label{exact seq}
0 \longrightarrow \operatorname{nil}Z \longhookrightarrow Z \stackrel{\bar{\psi}}{\longrightarrow} R,
\end{equation}
where $\bar{\psi}$ is an algebra homomorphism.
Therefore $\hat{Z} := Z/\operatorname{nil}Z$ is isomorphic to a subalgebra of $R$.
\end{Theorem}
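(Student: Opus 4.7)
The plan is to define the candidate map $\bar{\psi}: Z \to R$ by $\bar{\psi}(z) := \bar{\tau}_{\psi}(ze_i)$ for any choice of vertex $i \in Q_0$, and then verify that it is well-defined, lands inside $R$, is an algebra homomorphism, and has kernel exactly $\operatorname{nil}Z$. The embedding $\hat{Z} \hookrightarrow R$ will then follow immediately from the first isomorphism theorem applied to the exact sequence.

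The main obstacle, and essentially the only substantive step, is showing that $\bar{\tau}_{\psi}(ze_i)$ is independent of $i$. To prove this, I fix an arrow $a: i \to j$ in $Q$ and exploit centrality: $a \cdot ze_i = ze_j \cdot a$ in $A$. Applying $\bar{\tau}_{\psi}$ to both sides and using the multiplicativity of Lemma \ref{from B}.1 (extended linearly over the cycles making up $ze_i$ and $ze_j$) yields
\[
\bar{\tau}_{\psi}(a) \cdot \bar{\tau}_{\psi}(ze_i) \;=\; \bar{\tau}_{\psi}(ze_j) \cdot \bar{\tau}_{\psi}(a)
\]
inside the polynomial ring $B$. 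Now $\bar{\tau}_{\psi}(a)$ is either a nonzero monomial (if $a$ is not contracted by $\psi$) or the unit $1$ (if $\psi(a)$ is a vertex idempotent), so in either case it is nonzero in the integral domain $B$. Cancelling gives $\bar{\tau}_{\psi}(ze_i) = \bar{\tau}_{\psi}(ze_j)$. Since the underlying graph of $Q$ is connected, iterating along a sequence of adjacent arrows shows the value is independent of $i$. In particular $\bar{\psi}(z) \in \bar{\tau}_{\psi}(e_iAe_i)$ for every $i$, so $\bar{\psi}(z) \in R$.

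Multiplicativity of $\bar{\psi}$ is then immediate: for any fixed $i$, centrality gives $(z_1 z_2)e_i = (z_1 e_i)(z_2 e_i)$, and $\bar{\tau}_{\psi}$ restricted to the corner ring $e_iAe_i$ is already an algebra homomorphism, so $\bar{\psi}(z_1 z_2) = \bar{\psi}(z_1)\bar{\psi}(z_2)$. For exactness at $Z$, if $\bar{\psi}(z) = 0$ then $\bar{\tau}_{\psi}(ze_i) = 0$ for every $i$, so by the injectivity of $\bar{\tau}$ (Lemma \ref{from B}.2) we have $\psi(ze_i) = 0$ for each $i$, whence $\psi(z) = \sum_i \psi(ze_i) = 0$; the converse inclusion is clear. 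Thus $\ker \bar{\psi} = Z \cap \ker \psi$, which equals $\operatorname{nil}Z$ by Theorem \ref{nil}, completing the exact sequence.
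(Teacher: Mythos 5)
Your proof is correct and follows essentially the same route as the paper: define $\bar{\psi}(z) := \bar{\tau}_{\psi}(ze_i)$, show independence of $i$ via centrality and cancellation in the integral domain $B$, and then identify the kernel with $\operatorname{nil}Z$ via Theorem \ref{nil}. Your independence argument uses a single arrow $a: i \to j$ and iterates via connectedness of $\overbar{Q}$, whereas the paper uses a path from $j$ to $k$ directly; these are the same idea. You also streamline the kernel computation: the paper inserts an intermediate Claim (ii), namely that $\psi(ze_i) = 0$ for a \emph{single} $i$ already forces $\psi(z) = 0$ (this requires constructing $z' = \sum_j c_j^{-1}\psi(z)e_j \in Z'$, since $\psi(z)$ itself need not be central in $A'$). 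You avoid this entirely by observing that the block-universe equality gives $\psi(ze_i) = 0$ for \emph{all} $i$ at once, so $\psi(z) = \sum_i \psi(ze_i) = 0$ follows directly. This is a genuine (if small) simplification.

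One place you are slightly too quick: you assert ``the converse inclusion is clear,'' meaning $Z \cap \ker\psi \subseteq \ker\bar{\psi}$. This is not immediate from $\psi(z) = 0$, since $\psi(z) = \sum_{i}\psi(ze_i)$ decomposes into blocks $\sum_{i \in \psi^{-1}(j')}\psi(ze_i) = 0$ over $j' \in Q'_0$, and one must still rule out cancellation among the $\psi(ze_i)$ within a block (which follows from your block-universe identity plus injectivity of $\bar{\tau}$, since all terms in a block are equal). A cleaner route for what you actually need, $\operatorname{nil}Z \subseteq \ker\bar{\psi}$, is: if $z^n = 0$ then $\bar{\psi}(z)^n = \bar{\psi}(z^n) = 0$ by multiplicativity, so $\bar{\psi}(z) = 0$ since $B$ is a domain. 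Either way the gap is easily filled, so the proof stands.
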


\begin{proof}
(i) We first claim that for each $i \in Q_0$, the map
\begin{equation} \label{exact seq1}
\bar{\psi}: Z \to R, \ \ \ z \mapsto \overbar{ze_i},
\end{equation}
is a well-defined algebra homomorphism and independent of the choice of $i$.

Consider a central element $z \in Z$ and vertices $j,k \in Q_0$.
Since $Q$ is a dimer quiver, there is a path $p$ from $j$ to $k$.
For $i \in Q_0$, set $z_i := ze_i \in e_iAe_i$.
Recall that $\bar{\tau}_{\psi}$ is an algebra homomorphism on each vertex corner ring $e_iAe_i$.
Thus
\begin{equation*}
\overbar{p}  \overbar{z}_j = \overbar{pz_j} = \overbar{pz} = \overbar{zp} = \overbar{z_kp} = \overbar{z}_k  \overbar{p} \in k[\mathcal{S}'].
\end{equation*}
But $\overbar{p} = \bar{\tau}(\psi(p))$ is nonzero since $\bar{\tau}$ is injective by Lemma \ref{from B}.2, and the $\psi$-image of any path is nonzero.
Thus, since $k[\mathcal{S}']$ is an integral domain,
\begin{equation} \label{block universe}
\overbar{z}_j = \overbar{z}_k.
\end{equation}
Therefore, since $j,k \in Q_0$ were arbitrary,
\begin{equation*}
\overbar{z}_j \in k\left[ \cap_{i \in Q_0} \bar{\tau}_{\psi}(e_iAe_i) \right] = R.
\end{equation*}

(ii) Let $z \in Z$, $i \in Q_0$, and suppose $\psi(ze_i) = 0$.
We claim that $\psi(z) = 0$.

For each $j \in Q'_0$, denote by
\begin{equation*}
c_j := \left| \psi^{-1}(j) \cap Q_0 \right|
\end{equation*}
the number of vertices in $\psi^{-1}(j)$.
Since $\psi$ maps $Q_0$ surjectively onto $Q'_0$, we have $c_j \geq 1$.
Furthermore, if $k \in \psi^{-1}(j)$, then
\begin{equation} \label{cj}
\psi(z)e_j = c_j \psi(ze_k).
\end{equation}
Set
\begin{equation*}
z'_j := c_j^{-1} \psi(z)e_j.
\end{equation*}
Then
\begin{equation*}
z' := \sum_{j \in Q'_0} z'_j
\end{equation*}
is in the center $Z'$ of $A'$ by (\ref{block universe}) and \cite[(6) and Theorem 5.9.1]{B2}.\footnote{Note that $\psi(z)$ is not in $Z'$ if there are vertices $i,j \in Q'_0$ for which $c_i \not = c_j$.
Therefore, in general $\psi(Z)$ is not contained in $Z'$.}
Whence, for each $j \in Q'_0$,
\begin{equation*}
\bar{\tau}(z'_j) \stackrel{(\textsc{i})}{=} \bar{\tau}(z'_{\psi(i)}) = \bar{\tau}( c_{\psi(i)}^{-1} \psi(z)e_{\psi(i)}) \stackrel{\textsc{(ii)}}{=} \bar{\tau}(\psi(ze_i)) = 0,
\end{equation*}
where (\textsc{i}) holds by (\ref{block universe}) and (\textsc{ii}) holds by (\ref{cj}).
Thus, each $z'_j$ vanishes since $\bar{\tau}$ is injective, by Lemma \ref{from B}.2.
Therefore
\begin{equation*}
\psi(z) = \sum_{j \in Q'_0} \psi(z)e_j = \sum_{j \in Q'_0} c_jz'_j = 0.
\end{equation*}

(iii) We now claim that the homomorphism (\ref{exact seq1}) can be extended to the exact sequence (\ref{exact seq}).
Let $z \in \operatorname{ker} \bar{\psi}$.
Then for each $i \in Q_0$,
\begin{equation*}
\bar{\tau}(\psi(ze_i)) = \overbar{ze_i} = \bar{\psi}(z) = 0.
\end{equation*}
Whence $\psi(ze_i) = 0$ since $\bar{\tau}$ is injective.
Thus $\psi(z) = 0$ by Claim (ii).
Therefore, by Theorem \ref{nil},
\begin{equation*}
z \in \operatorname{ker}\psi \cap Z = \operatorname{nil}Z.
\end{equation*}
\end{proof}

\begin{Corollary} \label{domains}
The algebras $\hat{Z}$ and $R$ are integral domains.
Therefore, the central nilradical $\operatorname{nil}Z$ of $A$ is a prime ideal of $Z$.
In particular, the schemes $\operatorname{Spec}Z$, $\operatorname{Spec}\hat{Z}$, and $\operatorname{Spec}R$ are irreducible.
\end{Corollary}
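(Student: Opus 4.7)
The plan is to deduce everything from Theorem \ref{subalgebra} together with the observation that $B = k[x_D : D \in \mathcal{S}']$ is a polynomial ring, hence an integral domain.

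First I would note that both $S$ and $R$ sit inside $B$ by construction: $S = k[\cup_{i \in Q_0} \bar{\tau}_\psi(e_iAe_i)] \subseteq B$ and $R = \cap_{i \in Q_0} \bar{\tau}_\psi(e_iAe_i) \subseteq B$. Since a subalgebra of an integral domain is again an integral domain, both $S$ and $R$ are domains. By Theorem \ref{subalgebra}, $\hat{Z}$ embeds into $R$ as an algebra via the map $\bar\psi$, and hence $\hat{Z}$ is also an integral domain.

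Next, since $\hat{Z} = Z/\operatorname{nil}Z$ is a domain, the zero ideal is prime in $Z/\operatorname{nil}Z$, which means $\operatorname{nil}Z$ is a prime ideal of $Z$.

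Finally, for the irreducibility of the spectra: $\operatorname{Spec}\hat{Z}$ is irreducible because it is the spectrum of a domain (its zero ideal is its unique minimal prime). The surjection $Z \twoheadrightarrow \hat{Z}$ induces a closed immersion $\operatorname{Spec}\hat{Z} \hookrightarrow \operatorname{Spec}Z$ whose image is $V(\operatorname{nil}Z)$; but because $\operatorname{nil}Z$ is contained in every prime of $Z$, this image is all of $\operatorname{Spec}Z$ and the map is a homeomorphism. Thus $\operatorname{Spec}Z$ is also irreducible.

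There is no serious obstacle here, as the heavy lifting has already been done in Theorem \ref{subalgebra}; the corollary is essentially a packaging of that result together with the elementary algebraic fact that a subring of a domain is a domain.
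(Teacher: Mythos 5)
Your proposal is correct and follows essentially the same route as the paper: identify $R$ and $S$ as subalgebras of the polynomial ring $B$ (hence domains), deduce that $\hat Z$ is a domain from the embedding in Theorem~\ref{subalgebra}, and then conclude the standard facts about $\operatorname{nil}Z$ being prime and the spectra being irreducible. The paper leaves the last two consequences implicit, whereas you spell them out, but there is no substantive difference.
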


\begin{proof}
$R$ and $S$ are domains since they are subalgebras of the domain $k[\mathcal{S}']$.
Therefore $\hat{Z}$ is a domain since it isomorphic to a subalgebra of $R$ by Theorem \ref{subalgebra}.
\end{proof}

For brevity, we will identify $\hat{Z}$ with its isomorphic $\bar{\psi}$-image in $R$ (Theorem \ref{subalgebra}), and thus write $\hat{Z} \subseteq R$.

The following example shows that it is possible for the reduced center $\hat{Z}$ to be properly contained in the ghor center $R$.
However, they determine the same nonnoetherian variety \cite{B6}, and we will show below that their normalizations are equal (Theorem \ref{integral closure theorem}).

\begin{Example} \label{iso R} \rm{
Dimer algebras exist for which the containment $\hat{Z} \hookrightarrow R$ is proper.
Indeed, consider the contraction given in Figure \ref{1}.
This contraction is cyclic since the cycle algebra is preserved:
\begin{equation*}
S = k[x^2, xy, y^2, z] = S'.
\end{equation*}
We claim that the reduced center $\hat{Z}$ of $A = kQ/I$ is not isomorphic to $R$.
By the exact sequence (\ref{exact seq}), it suffices to show that the homomorphism $\bar{\psi}: Z \hookrightarrow R$ is not surjective.

We claim that the monomial $z\sigma$ is in $R$, but is not in the image $\bar{\psi}(Z)$.
It is clear that $z \sigma$ is in $R$ from the $\bar{\tau}_{\psi}$ labeling of arrows given in Figure \ref{1}.

Assume to the contrary that $z\sigma \in \bar{\psi}(Z)$.
Then, by (\ref{block universe}), for each $j \in Q_0$ there is an element in $Ze_j$ whose $\bar{\tau}_{\psi}$-image is $z\sigma$.
Consider the vertex $i \in Q_0$ shown in Figure \ref{Yoohoo}.
The set of cycles in $e_iAe_i$ with $\bar{\tau}_{\psi}$-image $z \sigma$ are drawn in red.
As is shown in the figure, none of these cycles `commute' with both of the arrows with tail at $i$.
Therefore $\hat{Z} \not \cong R$.
}\end{Example}

\begin{figure}
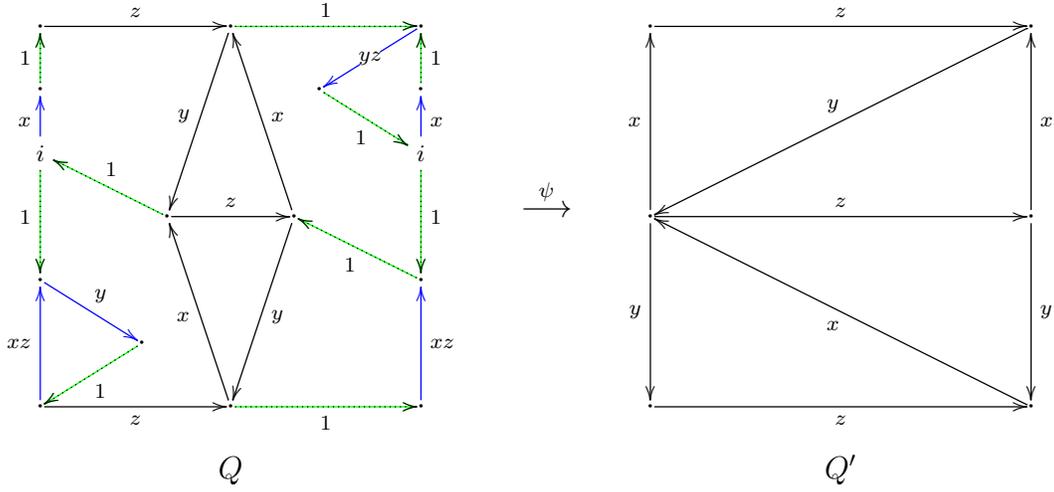

\begin{equation*}
\begin{array}{ccc}
\xy 0;/r.4pc/:
(-15,-15)*{\cdot}="1";(-15,-5)*{\cdot}="2";(-15,5)*+{\text{\scriptsize{$i$}}}="3";(-15,15)*{\cdot}="4";
(-7,-10)*{\cdot}="5";(0,-15)*{\cdot}="6";(-5,0)*{\cdot}="7";(0,15)*{\cdot}="8";
(5,0)*{\cdot}="9";(15,-15)*{\cdot}="10";(15,-5)*{\cdot}="12";
(15,5)*+{\text{\scriptsize{$i$}}}="13";(15,15)*{\cdot}="14";(7,10)*{\cdot}="15";
(-15,10)*{\cdot}="17";(15,10)*{\cdot}="18";
{\ar^{1}@[green]"5";"1"};{\ar@{..>}"5";"1"};{\ar^{y}@[blue]"2";"5"};{\ar^{xz}@[blue]"1";"2"};{\ar_{1}@[green]"3";"2"};{\ar@{..>}"3";"2"};
{\ar^{x}@[blue]"3";"17"};{\ar^{1}@[green]"17";"4"};{\ar@{..>}"17";"4"};
{\ar_{z}"1";"6"};{\ar^{x}"6";"7"};{\ar_{1}@[green]"7";"3"};{\ar@{..>}"7";"3"};
{\ar_{y}"8";"7"};{\ar^{z}"7";"9"};{\ar^{y}"9";"6"};{\ar_{x}"9";"8"};{\ar^{1}@[green]"8";"14"};{\ar@{..>}"8";"14"};
{\ar^{1}@[green]"12";"9"};{\ar@{..>}"12";"9"};{\ar_{1}@[green]"6";"10"};{\ar@{..>}"6";"10"};{\ar^{1}@[green]"13";"12"};{\ar@{..>}"13";"12"};{\ar_{xz}@[blue]"10";"12"};{\ar|-{yz}@[blue]"14";"15"};
{\ar_{1}@[green]"15";"13"};{\ar@{..>}"15";"13"};{\ar^{z}"4";"8"};
{\ar_{x}@[blue]"13";"18"};{\ar_{1}@[green]"18";"14"};{\ar@{..>}"18";"14"};
\endxy
& \ \ \ \stackrel{\psi}{\longrightarrow} \ \ \ &
\xy 0;/r.4pc/:
(-15,-15)*{\cdot}="1";(-15,0)*{\cdot}="2";(-15,15)*{\cdot}="3";
(15,-15)*{\cdot}="4";(15,0)*{\cdot}="5";(15,15)*{\cdot}="6";
{\ar_{y}"2";"1"};{\ar^{x}"2";"3"};{\ar_{z}"1";"4"};{\ar^{x}"4";"2"};{\ar^{z}"2";"5"};{\ar_{y}"6";"2"};
{\ar^{z}"3";"6"};{\ar^{y}"5";"4"};{\ar_{x}"5";"6"};
\endxy \\
Q & & Q'
\end{array}
\end{equation*}
\caption{A cyclic contraction $\psi: A \to A'$ for which the containment $\hat{Z} \hookrightarrow R$ is proper.
$Q$ and $Q'$ are drawn on a torus, and the contracted arrows are drawn in green.
The arrows drawn in blue form removable 2-cycles under $\psi$.
The arrows in $Q$ are labeled by their $\bar{\tau}_{\psi}$-images, and the arrows in $Q'$ are labeled by their $\bar{\tau}$-images.}
\label{1}
\end{figure}

\begin{figure}
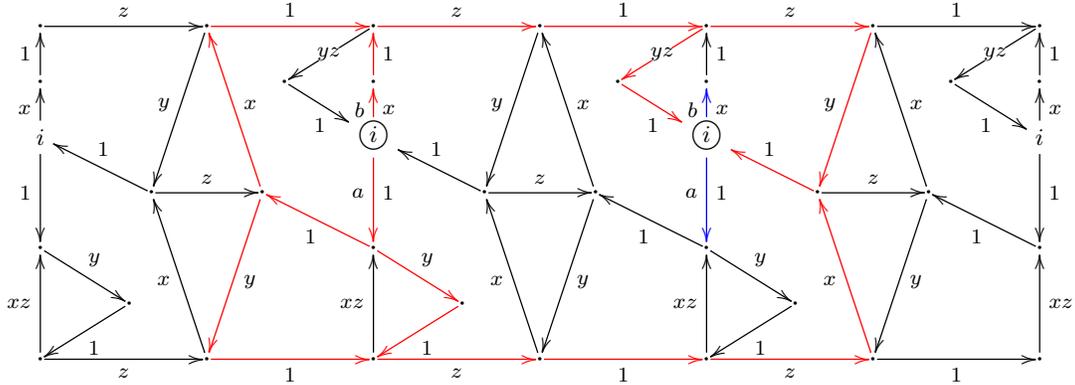

\begin{equation*}
\xy 0;/r.35pc/:
(-15,-15)*{\cdot}="1";(-15,-5)*{\cdot}="2";(-15,5)*+{\textcircled{\scriptsize{$i$}}}="3";
(-15,15)*{\cdot}="4";
(-7,-10)*{\cdot}="5";(0,-15)*{\cdot}="6";(-5,0)*{\cdot}="7";(0,15)*{\cdot}="8";
(5,0)*{\cdot}="9";(15,-15)*{\cdot}="10";(15,-5)*{\cdot}="12";
(15,5)*+{\textcircled{\scriptsize{$i$}}}="13";(15,15)*{\cdot}="14";(7,10)*{\cdot}="15";
(-15,10)*{\cdot}="17";(15,10)*{\cdot}="18";
(-45,-15)*{\cdot}="1a";(-45,-5)*{\cdot}="2a";(-45,5)*+{\text{\scriptsize{$i$}}}="3a";
(-45,15)*{\cdot}="4a";
(-37,-10)*{\cdot}="5a";(-30,-15)*{\cdot}="6a";(-35,0)*{\cdot}="7a";(-30,15)*{\cdot}="8a";
(-25,0)*{\cdot}="9a";
(-23,10)*{\cdot}="15a";
(-45,10)*{\cdot}="17a";
(23,-10)*{\cdot}="5b";(30,-15)*{\cdot}="6b";(25,0)*{\cdot}="7b";(30,15)*{\cdot}="8b";
(35,0)*{\cdot}="9b";(45,-15)*{\cdot}="10b";(45,-5)*{\cdot}="12b";
(45,5)*+{\text{\scriptsize{$i$}}}="13b";(45,15)*{\cdot}="14b";(37,10)*{\cdot}="15b";
(45,10)*{\cdot}="18b";
{\ar^{1}@[red]"5";"1"};{\ar^{y}@[red]"2";"5"};{\ar^{xz}"1";"2"};{\ar^{1}_a@[red]"3";"2"};
{\ar_{x}^b@[red]"3";"17"};{\ar_{1}@[red]"17";"4"};
{\ar_{z}@[red]"1";"6"};{\ar^{x}"6";"7"};{\ar_{1}"7";"3"};
{\ar_{y}"8";"7"};{\ar^{z}"7";"9"};{\ar^{y}"9";"6"};{\ar_{x}"9";"8"};{\ar^{1}@[red]"8";"14"};
{\ar^{1}"12";"9"};{\ar_{1}@[red]"6";"10"};{\ar^{1}_a@[blue]"13";"12"};{\ar^{xz}"10";"12"};{\ar|-{yz}@[red]"14";"15"};
{\ar_{1}@[red]"15";"13"};{\ar^{z}@[red]"4";"8"};
{\ar_{x}^b@[blue]"13";"18"};{\ar_{1}"18";"14"};
{\ar^{1}"5a";"1a"};{\ar^{y}"2a";"5a"};{\ar^{xz}"1a";"2a"};{\ar_{1}"3a";"2a"};
{\ar^{x}"3a";"17a"};{\ar^{1}"17a";"4a"};
{\ar_{z}"1a";"6a"};{\ar^{x}"6a";"7a"};{\ar_{1}"7a";"3a"};
{\ar_{y}"8a";"7a"};{\ar^{z}"7a";"9a"};{\ar^{y}@[red]"9a";"6a"};{\ar_{x}@[red]"9a";"8a"};{\ar^{1}@[red]"8a";"4"};
{\ar^{1}@[red]"2";"9a"};{\ar_{1}@[red]"6a";"1"};{\ar|-{yz}"4";"15a"};
{\ar_{1}"15a";"3"};{\ar^{z}"4a";"8a"};
{\ar^{1}"5b";"10"};{\ar^{y}"12";"5b"};
{\ar_{z}@[red]"10";"6b"};{\ar^{x}@[red]"6b";"7b"};{\ar_{1}@[red]"7b";"13"};
{\ar_{y}@[red]"8b";"7b"};{\ar^{z}"7b";"9b"};{\ar^{y}"9b";"6b"};{\ar_{x}"9b";"8b"};{\ar^{1}"8b";"14b"};
{\ar^{1}"12b";"9b"};{\ar_{1}"6b";"10b"};{\ar^{1}"13b";"12b"};{\ar_{xz}"10b";"12b"};{\ar|-{yz}"14b";"15b"};
{\ar_{1}"15b";"13b"};{\ar^{z}@[red]"14";"8b"};
{\ar_{x}"13b";"18b"};{\ar_{1}"18b";"14b"};
\endxy
\end{equation*}
\caption{There are six cycles $p_1, \ldots, p_6$ (or five distinct cycles modulo $I$) at $i$ with $\bar{\tau}_{\psi}$-image $z \sigma = xyz^2$, drawn in red.
There are two arrows with tail at $i$, labeled $a$ and $b$.
Observe that if the rightmost arrow subpath of $p_j$ is $a$ (resp.\ $b$), then $bp_j$ (resp.\ $ap_j$) cannot homotope to a path whose rightmost arrow subpath is $b$ (resp.\ $a$).
Therefore there is no cycle $q_j$ for which $bp_j \equiv q_jb$ (resp.\ $ap_j \equiv q_ja$).
Consequently, $z\sigma$ is in $R \setminus \hat{Z}$.}
\label{Yoohoo}
\end{figure}

Example \ref{iso R} raises the following question.

\begin{Question} \label{if and only if question} \rm{
Are there necessary and sufficient conditions for the isomorphism $\hat{Z} \cong R$ to hold?
}\end{Question}

\section{Normalization of the reduced center}

Let $Z$ and $S$ be the center and cycle algebra of a nonnoetherian dimer algebra $A = kQ/I$, let $\psi: A \to A'$ be a cyclic contraction, and let $R \subset S$ be the center of the ghor algebra $\Lambda$ of $Q$.
In this section we will show that the normalizations of $\hat{Z} = Z/\operatorname{nil}Z$ and $R$ are equal, nonnoetherian, and properly contained in the cycle algebra $S$.
We denote by $\bar{Z}$ and $\bar{R}$ their respective normalizations.
Recall that for $g, h \in S$, we write $g \mid h$ if $g$ divides $h$ in the polynomial ring $k[\mathcal{S}']$.

\begin{Lemma} \label{generalize}
Let $u \in \mathbb{Z}^2 \setminus 0$.
Suppose $a \in Q_1$, $p \in \hat{\mathcal{C}}^u_{\operatorname{t}(a)}$, and $q \in \hat{\mathcal{C}}^u_{\operatorname{h}(a)}$.
If $\mathcal{R}^{\circ}_{ap,qa}$ contains no vertices, then $ap = qa$.
Consequently, $\overbar{p} = \overbar{q}$.
\end{Lemma}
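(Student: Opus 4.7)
The plan is to work in the covering quiver $Q^+$ and argue by induction on the number $n$ of unit cycles of $Q^+$ enclosed by $\mathcal{R} := \mathcal{R}_{ap,qa}$. The lifts $(ap)^+$ and $(qa)^+$ share the same tail $\operatorname{t}(a)^+$ and head $\operatorname{h}(a)^+ + u$, and, because $p,q \in \hat{\mathcal{C}}^u$ have no cyclic subpaths in their lifts, these two lifts are embedded arcs bounding a genuine topological disk $\mathcal{R}$. For the base case $n = 0$, the two boundary paths coincide as paths in $Q^+$, so $ap = qa$ trivially in $A$.

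For the inductive step I would locate a unit cycle $\sigma$ of $Q^+$ whose face is contained in $\mathcal{R}$ and which shares a positive-length arc with, say, $(ap)^+$. Writing $\sigma = ts$ with $s$ an arc of $(ap)^+$ and $t$ its complementary arc (lying in $\mathcal{R}$), the defining relations in $I$ of (\ref{I}) give $s = t$ in $A$. Substituting $t$ for $s$ in $ap$ produces a path $ap'$ with $ap' = ap$ in $A$, whose lift $(ap')^+$ together with $(qa)^+$ bounds a region with exactly $n-1$ enclosed unit cycles and still no interior vertices. The induction hypothesis then yields $ap' = qa$, hence $ap = qa$.

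The main obstacle is justifying that such a reducing $\sigma$ always exists when $n \geq 1$. The hypothesis $\mathcal{R}^{\circ} \cap Q_0^+ = \emptyset$ is essential here: it forces every face of $Q^+$ inside $\mathcal{R}$ to be a unit cycle all of whose vertices lie on the boundary $(ap)^+ \cup (qa)^+$. Consequently at least one such face is adjacent to the boundary along a genuine arrow subpath, and a topological walk along the boundary (examining the first face encountered on the interior side, and using that $\mathcal{R}$ is a disk by the $\hat{\mathcal{C}}$-hypothesis) locates $\sigma$. The $\hat{\mathcal{C}}^u$ hypothesis plays exactly the role of preventing pathologies in which $(ap)^+$ or $(qa)^+$ could self-intersect or wrap around and spoil the bigon structure of $\mathcal{R}$.

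The consequent assertion $\overbar{p} = \overbar{q}$ is immediate from the first part together with Lemma \ref{from B}.1: from $ap = qa$ we obtain
\[
\overbar{a}\,\overbar{p} \;=\; \overbar{ap} \;=\; \overbar{qa} \;=\; \overbar{q}\,\overbar{a}
\]
in the polynomial ring $B = k[x_D : D \in \mathcal{S}']$. Since $\overbar{a} = \bar{\tau}(\psi(a))$ is a nonzero monomial in $B$ (equal to $1$ if $a$ is contracted by $\psi$, and otherwise a product of the variables $x_D$ over the simple matchings containing $\psi(a)$), cancellation in the integral domain $B$ yields $\overbar{p} = \overbar{q}$.
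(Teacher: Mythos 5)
Your proof has two genuine gaps, one structural and one in the inductive step.

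\textbf{Gap 1: the lifts $(ap)^+$ and $(qa)^+$ need not be embedded.} You assert that since $p,q \in \hat{\mathcal{C}}^u$, the lifts $(ap)^+$ and $(qa)^+$ are embedded arcs bounding a disk, and later claim the $\hat{\mathcal{C}}^u$ hypothesis ``prevents pathologies in which $(ap)^+$ or $(qa)^+$ could self-intersect.'' This is not so. The hypothesis $p \in \hat{\mathcal{C}}$ guarantees that $p^+$ itself has no cyclic subpaths, but says nothing about the \emph{concatenation} $(ap)^+$. If $\operatorname{h}(a)^+ + u$ lies on $p^+$, then $(ap)^+$ acquires a cyclic subpath ending with the translated arrow $a$; dually, $(qa)^+$ has a cyclic subpath precisely when $\operatorname{t}(a)^+$ lies on $q^+$. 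The paper's proof is built around exactly this distinction: it first invokes \cite[Lemma 4.12.2]{B2} in the case that $(ap)^+$ and $(qa)^+$ have no cyclic subpaths, and then devotes the bulk of the argument to the case ``$(qa)^+$ contains a cyclic subpath,'' where it factors $q = q_2q_1$ with $(q_1a)^+$ a cycle, identifies $p$ with the cyclic permutation $q_1q_2$, and uses the centrality of the unit cycle ($\sigma^{\ell}_{\operatorname{h}(a)}q_2 = q_2\sigma^{\ell}_{\operatorname{t}(a)}$) to close the argument. Your sketch has no analogue of this case and so does not cover the full statement.

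\textbf{Gap 2: the move in the inductive step is not an allowed relation.} You decompose a unit cycle $\sigma = ts$ with $s$ an arc of $(ap)^+$ and $t$ the complementary arc, and claim the relations in $I$ give $s = t$ in $A$. They do not. The ideal $I$ of (\ref{I}) identifies $p$ with $q$ when $pa$ and $qa$ are \emph{both} unit cycles for the \emph{same} arrow $a$ --- that is, it identifies the complementary arcs of a fixed arrow across the two faces that arrow separates. It does not identify the two arcs $s$, $t$ of a single face, and indeed it cannot: $s$ and $t$ point in opposite directions (one runs from $\operatorname{t}(s)$ to $\operatorname{h}(s)$ and the other from $\operatorname{h}(s)$ to $\operatorname{t}(s)$), so ``$s = t$'' is not even a well-formed identity of paths. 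A correct face-by-face homotopy must instead locate a subpath $c$ of $(ap)^+$ and an arrow $b$ with $cb$ a unit cycle, then replace $c$ by the complementary arc $c'$ of the adjacent face ($c'b$ a unit cycle), a move that sweeps across \emph{two} faces at once and whose availability at every stage requires an argument of its own. This is precisely the content that the paper outsources to \cite[Lemma 4.12.2]{B2} rather than re-deriving.

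The final cancellation argument giving $\overbar{p} = \overbar{q}$ from $ap = qa$ is correct and matches the paper.
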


\begin{proof}
Suppose that there are representatives of $(ap)^+$ and $(qa)^+$ that bound a compact region $\mathcal{R}_{ap,qa}$ with no vertices in its interior.
If $(ap)^+$ and $(qa)^+$ have no cyclic subpaths (modulo $I$), then $ap = qa$ by Lemma \ref{4.13.2}.

So suppose $(qa)^+$ contains a cyclic subpath.
The path $q^+$ has no cyclic subpaths since $q$ is in $\hat{\mathcal{C}}$.
Thus $q$ factors into paths $q = q_2q_1$, where $(q_1a)^+$ is a cycle.
In particular,
\begin{equation*}
\operatorname{t}(p^+) = \operatorname{t}((q_1q_2)^+) \ \ \ \text{ and } \ \ \ \operatorname{h}(p^+) = \operatorname{h}((q_1q_2)^+).
\end{equation*}
Whence $p$ and $q_1q_2$ bound a compact region $\mathcal{R}_{p,q_1q_2}$.
Furthermore, its interior $\mathcal{R}^{\circ}_{p,q_1q_2}$ contains no vertices since $\mathcal{R}^{\circ}_{ap,qa}$ contains no vertices.

The path $(q^2)^+$ has no cyclic subpaths, again since $q$ is in $\hat{\mathcal{C}}$.
Thus $(q_1q_2)^+$ also has no cyclic subpaths.
Furthermore, $p^+$ has no cyclic subpaths since $p$ is in $\hat{\mathcal{C}}$.
Therefore $p = q_1q_2$, again by Lemma \ref{4.13.2}.

Since there are no vertices in $\mathcal{R}^{\circ}_{ap,qa}$, there are also no vertices in the interior of the region bounded by the cycle $(aq_1)^+$.
Thus, there is some $\ell \geq 1$ such that
\begin{equation*}
aq_1 = \sigma^{\ell}_{\operatorname{h}(a)} \ \ \text{ and } \ \ q_1a = \sigma^{\ell}_{\operatorname{t}(a)}.
\end{equation*}
Therefore,
\begin{equation*}
ap = aq_1q_2 = \sigma^{\ell}_{\operatorname{h}(a)}q_2 \stackrel{\textsc{(i)}}{=} q_2 \sigma^{\ell}_{\operatorname{t}(a)} = q_2q_1a = qa,
\end{equation*}
where (\textsc{i}) holds by Lemma \ref{unit cycle lemma}.

Finally, $ap = qa$ and $\overbar{a} \not = 0$ together imply $\overbar{p} = \overbar{q}$ by Lemma \ref{from B}.1.
\end{proof}

\begin{Lemma} \label{t in S}
If $g$ is a monomial in $k[\mathcal{S}']$ and $g \sigma$ is in $S$, then $g$ is also in $S$.
\end{Lemma}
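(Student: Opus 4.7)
The plan is to realize the monomial $g\sigma$ as the $\bar\tau$-image of an actual cycle in the cancellative dimer algebra $A'$, and then to peel off a unit cycle so that what remains is a cycle whose $\bar\tau$-image equals $g$. Because $\psi: A \to A'$ is a cyclic contraction, $S = S' = k[\cup_{i \in Q'_0} \bar\tau(e_iA'e_i)]$, and since $A'$ is cancellative (hence noetherian), one has in fact $S = \bar\tau(e_iA'e_i)$ for every $i \in Q'_0$. Hence I may choose a vertex $i \in Q'_0$ and a cycle $p \in e_iA'e_i$ with $\bar\tau(p) = g\sigma$. Producing a cycle $q$ with $\bar\tau(q) = g$ then places $g$ in $\bar\tau(e_jA'e_j) \subseteq S$.

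I would first dispose of the easy case $p \in \mathcal{C}^0$. By Lemma \ref{cyclelemma}.1, $\bar\tau(p) = \sigma^m$ for some $m \geq 1$, so $g = \sigma^{m-1}$ lies in $S$ since $\sigma \in S$.

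In the remaining case $p \in \mathcal{C}^u_i$ with $u \neq 0$, I would aim to exhibit a factorization $p = \sigma_i \cdot q$ modulo $I'$ for some cycle $q \in e_iA'e_i$; then $\bar\tau(q) = g$ and we are done. To produce such a $q$, I would use the hypothesis $\sigma \mid \bar\tau(p)$: this forces $n_D(p) \geq 1$ for every simple matching $D \in \mathcal{S}'$, so $p$ contains at least one arrow from each simple matching. Combined with the cancellativity of $A'$, I expect that an appropriate representative of $p$ in $Q'^+$ contains a cyclic subpath; by Lemma \ref{cyclelemma}.1 together with cancellativity, such a subpath must equal a power $\sigma_j^k$ of the unit cycle at some vertex $j$. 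The commutation $\sigma_j r \equiv r \sigma_i$, valid for any path $r : i \to j$ by Lemma \ref{from B}.3, then allows me to slide the extracted unit cycle to the vertex $i$ and conclude.

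The main obstacle is the possibility that $p \in \hat{\mathcal{C}}^u_i$ with $\sigma \mid \bar\tau(p)$; there the lift $p^+$ has no cyclic subpath, so no unit cycle $\sigma_j$ appears directly as a subpath. Handling this will require the cancellativity of $A'$ in an essential way: either by showing that, in a noetherian dimer algebra, $p \in \hat{\mathcal{C}}$ forces $\sigma \nmid \bar\tau(p)$ (ruling the case out entirely), or by producing a different representative of $p$ in $e_iA'e_i$, whose lift \emph{does} contain a cyclic subpath, by multiplying $p$ by a suitable cycle and again applying cancellation via Lemma \ref{from B}.2. This step is the technical heart of the argument, and is where the noetherian hypothesis on $A'$ (as opposed to merely $A$) will be indispensable.
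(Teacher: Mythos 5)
Your overall strategy matches the paper's: realize $g\sigma$ as $\overbar{p}$ for a cycle $p$ in the cancellative algebra $A'$, and peel off a factor of $\sigma$. You have also correctly located the crux: what happens when $p \in \hat{\mathcal{C}}'^u$ with $\sigma \mid \overbar{p}$. But the proposal stops exactly there, and the resolution is not as optional as you suggest — it is the whole content of the lemma.

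The paper resolves it precisely along the lines of your first alternative, but via a slightly more indirect (and cleaner) route. Instead of trying to extract a cyclic subpath directly from $p$, it invokes the structure theory of cancellative dimer algebras from [B2]: (a) for any $u$, there exists a cycle $q \in \hat{\mathcal{C}}'^u$ (\cite[Prop.\ 4.10]{B2}); (b) for such $q$ in a cancellative algebra, $\sigma \nmid \overbar{q}$ (\cite[Prop.\ 4.20.1]{B2}); and (c) any two cycles in $A'$ with the same winding class $u$ have $\bar{\tau}$-images differing by a power of $\sigma$ (\cite[Lemma 4.18]{B2}). Combining these, $\overbar{p} = \overbar{q}\,\sigma^m$ with $m \geq 1$ since $\sigma \mid \overbar{p}$, and then $g = \overbar{q}\,\sigma^{m-1} \in S' = S$. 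Note the paper does not produce a new representative of $p$ itself, nor extract a unit cycle from $p$; it produces a genuinely different cycle $q$ in the same winding class and uses the divisibility relation (c) between $\overbar{p}$ and $\overbar{q}$. Your more constructive variant (factor $p$ itself as $\sigma_i \cdot q$) would require additional work: a cyclic subpath of $p^+$ need not literally equal a power of $\sigma_j$, only have $\bar{\tau}$-image $\sigma^m$ by Lemma \ref{cyclelemma}.1, and equating the two again requires cancellativity (in a non-cancellative algebra this can fail, cf.\ Remark \ref{first remark}). So as written the argument is incomplete; the missing inputs are the results (a)--(c) above from [B2], which do all the real work. One small point you get right that the paper elides: the case $u = 0$ should be treated separately (where $\overbar{p} = \sigma^m$ by Lemma \ref{cyclelemma}.1, and $m \geq 2$ since $g$ is a non-constant monomial), since $\hat{\mathcal{C}}'^0 = \emptyset$ and (a) would not apply.
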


\begin{proof}
Suppose $g$ is a monomial in $k[\mathcal{S}']$ for which $g \sigma$ is in $S$.
Then there is a cycle $p \in A'$ such that $\overbar{p} = g\sigma$.
Let $u \in \mathbb{Z}^2$ be such that $p \in \mathcal{C}'^u$.
Since $A'$ is cancellative, $\hat{\mathcal{C}}'^u \not = \emptyset$ by \cite[Proposition 4.11]{B2}; fix $q \in \hat{\mathcal{C}}'^u$. 
Then $\sigma \nmid \overbar{q}$ by \cite[Proposition 4.21.1]{B2}.
Thus, there is some $m \geq 1$ such that
\begin{equation*}
\overbar{q} \sigma^m = \overbar{p} = g \sigma,
\end{equation*}
by \cite[Lemma 4.19]{B2}.
Therefore,
\begin{equation*}
g = (g \sigma) \sigma^{-1} = \overbar{q} \sigma^{m-1} \in S' \stackrel{\textsc{(i)}}{=} S,
\end{equation*}
where (\textsc{i}) holds since $\psi$ is cyclic.
\end{proof}

\begin{Lemma} \label{big enough}
There is some $n \geq 1$ for which
\begin{equation} \label{nested}
\sigma^{n-1} S \not \subseteq R \ \ \text{ and } \ \ \sigma^n S \subset R.
\end{equation}
\end{Lemma}

\begin{proof}
Let $s \in S$.
Since $S$ is generated over $k$ by a set of monomials in $k[\mathcal{S}']$, we may assume that $s$ is a monomial.
In particular, there is a cycle $p$ for which $\overbar{p} = s$.

By Lemma \ref{unit cycle lemma}, there is some $n \geq 1$ such that for each $i \in Q_0$, the unit cycle $\sigma_i^n$ is equal (modulo $I$) to a cycle $q_i$ that passes through each vertex of $Q$.
Thus, the concatenated cycle $q_{\operatorname{t}(p)}p$ passes through each vertex of $Q$.
Whence $\sigma^n \overbar{p} = \overbar{q_{\operatorname{t}(p)}p}$ is in $R$.
Therefore $\sigma^n S \subset R$.
Since $S \not \subseteq R$, there is a minimal such $n \geq 1$.
\end{proof}

\begin{Proposition} \label{r in R and} \
\begin{enumerate}
 \item If $r \in R$ and $\sigma \nmid r$, then $r \in \hat{Z}$.
 \item If $s \in S$, then there is some $n \geq 0$ such that for each $m \geq 1$, $s^m \sigma^n \in \hat{Z}$.
 \item If $r \in R$, then there is some $n \geq 1$ such that $r^n \in \hat{Z}$.
\end{enumerate}
\end{Proposition}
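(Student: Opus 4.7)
The plan is to treat the three parts in order, relying on the structural results already in place—Lemmas \ref{generalize}, \ref{r in T'2}, \ref{cyclelemma}, and \ref{t in S}, together with Theorem \ref{subalgebra}—and reducing to monomial $g$ by linearity, since both $R$ and $S$ admit monomial $k$-bases in $B$.

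For (1), let $g$ be a monomial in $R$ with $\sigma \nmid g$. For each $i \in Q_0$, pick $p_i \in e_iAe_i$ with $\overline{p_i} = g$; by the contrapositive of Lemma \ref{cyclelemma}.2, $p_i \in \hat{\mathcal{C}}_i$, and Lemma \ref{r in T'2} then forces all the $p_i$ to share a common class $u \in \mathbb{Z}^2 \setminus \{0\}$. Set $z := \sum_i p_i$; centrality of $z$ reduces, for each arrow $a : i \to j$, to the identity $a p_i = p_j a$, which is precisely the conclusion of Lemma \ref{generalize} provided the representatives are chosen so that $\mathcal{R}^{\circ}_{ap_i,p_ja}$ contains no vertex of $Q^+$. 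The hard step is producing globally compatible choices of the $p_i$; I would do so by choosing each lift $p_i^+$ extremal on a fixed side of the class-$u$ strip, with the hypothesis $\sigma \nmid g$ rigidifying these extremal lifts (no unit-cycle insertions are allowed) so that they work simultaneously for every arrow. Once $z \in Z$, Theorem \ref{subalgebra} gives $g = \bar{\psi}(z) \in \hat{Z}$.

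For (2), let $g = \overline{p} \in S$ be a monomial with $p$ a cycle at some vertex $j$. For each $i \in Q_0$, fix paths $r_i : i \to j$ and $s_i : j \to i$ in $Q$ and consider $s_i p^n r_i \in e_iAe_i$, whose $\bar{\tau}_\psi$-image is $g^n \cdot \overline{s_i r_i}$. If the cycle $s_i r_i$ at $i$ is contractible, Lemma \ref{cyclelemma}.1 gives $\overline{s_i r_i} = \sigma^{m_i}$ directly; otherwise I would insert a compensating cycle at $j$ in the class opposite to that of $s_i r_i$ (using nondegeneracy and the topology of $T^2$) to force contractibility, at the expense of boosting $m_i$ by a bound depending only on $Q$ and the chosen $r_i,s_i$, not on $n$. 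Taking $N := \max_i m_i$ and padding by unit cycles yields $g^n \sigma^N \in R$ with $N$ uniform in $n$. To upgrade $R$-membership to $\hat{Z}$-membership, I would rerun the argument of (1) on the cycles at each $i$ with image $g^n \sigma^N$: the $\sigma^N$ factor supplies the slack needed to make the region $\mathcal{R}^{\circ}_{a q^{(n)}_i,q^{(n)}_j a}$ vertex-free, triggering Lemma \ref{generalize}. The polynomial case follows by taking the maximum of the $N$'s over the finitely many monomials of $g$. The main obstacle is verifying the uniformity of $N$ in $n$, which I expect to follow from a combinatorial bound on the torus.

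For (3), given $g \in R$, decompose $g = g_0 + g_1$ where $g_0$ (resp.\ $g_1$) collects the monomials of $g$ not divisible (resp.\ divisible) by $\sigma$. By (1), $g_0 \in \hat{Z}$. By Lemma \ref{t in S}, $g_1 = \sigma h$ for some $h \in S$; applying (2) to $h$ yields $N$ with $h^n \sigma^N \in \hat{Z}$ for all $n \geq 1$, so that
\[
g_1^M = \sigma^M h^M = \sigma^{M-N}\bigl(h^M \sigma^N\bigr) \in \hat{Z} \qquad \text{for all } M \geq N,
\]
using $\sigma \in \hat{Z}$. Expanding $g^M = (g_0 + g_1)^M$, the terms $g_0^{M-k} g_1^k$ with $k \geq N$ lie in $\hat{Z}$ directly (as $g_0^{M-k}, g_1^k \in \hat{Z}$), while the low-$k$ cross-terms I would handle by iterating the same decomposition on $g_1^k = \sigma^k h^k$, absorbing $\sigma^k$ into $\hat{Z}$ and applying (2) repeatedly. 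Choosing $M$ sufficiently large then yields $g^M \in \hat{Z}$. The delicate point here will be ensuring the low-$k$ cross-terms combine into $\hat{Z}$-elements, which I expect to follow from a careful accounting of $\sigma$-factors and the fact that $\hat{Z}$ is a ring containing both $g_0$ and $\sigma$.
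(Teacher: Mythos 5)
Your proposal diverges from the paper's proof in all three parts, and each divergence introduces a genuine gap.

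For (1), the paper does not try to choose ``globally compatible extremal lifts''; instead it fixes arbitrary cycles $c_i$ with $\overbar{c}_i = g$, and for each arrow $a$ proves $ac_{\operatorname{t}(a)} = c_{\operatorname{h}(a)}a$ by \emph{induction on the number of vertices in} $\mathcal{R}^{\circ}_{ap,qa}$: if a vertex $i^+$ lies in the interior, the cycle $c_i$ (in the same homology class $u$ by Lemma \ref{r in T'2}) must cross $p$ or $q$ twice, so a subpath of $p$ can be replaced by the corresponding subpath of $c_i^2$ (via \cite[Lemma 4.12.2]{B2}), shrinking the region. Your plan to make the region vertex-free by a one-shot extremality argument is unsubstantiated: the interior can contain vertices no matter how you pick the lifts, because the $c_i$ are constrained to have image $g$ at \emph{every} vertex, and the paper's induction is there precisely to absorb those interior vertices.

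For (2), the paper fixes an arrow $a : i \to j$, closes up the path $ac_i$ by a path $r$, and invokes \cite[Lemma 4.3.1]{B2} to get identities $\sigma_i^m = rc_j a\sigma_i^\ell$ and $ac_ir\sigma_j^\ell = \sigma_j^n$; applying $\bar{\tau}_\psi$ and using that $B$ is a domain forces $m = n$, whence $a(c_i\sigma_i^n) = (c_j\sigma_i^n)a$ by a direct computation. This gives centrality of $\sum_i c_i\sigma_i^N$ and of $\sum_i c_i^n\sigma_i^N$ with $N$ visibly uniform in $n$. Your conjugation-by-$s_i,r_i$ idea is similar in spirit but has two holes: the ``compensating cycle'' fix for non-contractible $s_ir_i$ is not specified in a way that keeps the $\bar{\tau}_\psi$-image equal to $g^n\sigma^{m_i}$, and, more seriously, your proposed upgrade from $R$-membership to $\hat{Z}$-membership by ``rerunning the argument of (1)'' cannot work: claim (1) \emph{requires} $\sigma \nmid g$, and the cycles with image $g^n\sigma^N$ are forced to have lifts with cyclic subpaths once $N \geq 1$, so they fail the $\hat{\mathcal{C}}$ hypothesis of Lemma \ref{generalize}. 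The paper avoids this entirely by proving centrality directly rather than via Lemma \ref{generalize}.

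For (3), the paper's proof is a one-line reduction: for a monomial $g$ with $\sigma \mid g$, write $g = h\sigma$, invoke Lemma \ref{t in S} to get $h \in S$, and apply (2) to obtain $g^N = h^N\sigma^N \in \hat{Z}$. Your decomposition $g = g_0 + g_1$ and the binomial expansion of $g^M$ is far more elaborate, and the handling of the low-$k$ cross-terms $g_0^{M-k}g_1^k$ with $k < N$ is not actually completed: absorbing $\sigma^k$ into $\hat{Z}$ leaves a factor $h^k$ with $k < N$, which (2) does not by itself place in $\hat{Z}$; the proposed ``iterate the decomposition'' step is not spelled out and, as written, circles back to the same problem.
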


\begin{proof}
(1) Since $R$ is generated over $k$ by a set of monomials in $k[\mathcal{S}']$, it suffices to consider a monomial $r \in R$ for which $\sigma \nmid r$.
For each vertex $i \in Q_0$, then, there is a cycle $c_i \in e_iAe_i$ satisfying $\overbar{c}_i = r$. 
Fix $a \in Q_1$ and set
\begin{equation*}
p := c_{\operatorname{t}(a)} \ \ \ \text{ and } \ \ \ q := c_{\operatorname{h}(a)}.
\end{equation*}
See Figure \ref{presentism?}.
We claim that $ap = qa$.

Let $u, v \in \mathbb{Z}^2$ be such that
\begin{equation*}
p \in \mathcal{C}^u \ \ \ \text{ and } \ \ \ q \in \mathcal{C}^v.
\end{equation*}
Then $u = v$ since $\overbar{p} = r = \overbar{q}$, by Lemma \ref{cyclelemma}.2.
Furthermore, $u \not = 0$ since $\sigma \nmid r$, by Lemma \ref{cyclelemma}.1. 
Therefore, $(ap)^+$ and $(qa)^+$ bound a compact region $\mathcal{R}_{ap,qa}$ in $\mathbb{R}^2$.

We proceed by induction on the number of vertices in the interior $\mathcal{R}^{\circ}_{ap,qa}$.

First suppose there are no vertices in $\mathcal{R}^{\circ}_{ap,qa}$.
Since $\sigma \nmid r = \overbar{p} = \overbar{q}$, the cycles $p$ and $q$ are in $\hat{\mathcal{C}}$, by Lemma \ref{cyclelemma}.3. 
Therefore $ap = qa$, by Lemma \ref{generalize}.

So suppose $\mathcal{R}^{\circ}_{ap,qa}$ contains at least one vertex $i^+$.
Let $w \in \mathbb{Z}^2$ be such that $c_i \in \mathcal{C}^w$.
Then $w = u = v$, again by Lemma \ref{cyclelemma}.2.
Therefore $c_i$ intersects $p$ at least twice or $q$ at least twice.
Suppose $c_i$ intersects $p$ at vertices $j$ and $k$.
Then $p$ factors into paths
\begin{equation*}
p = p_2 e_k b e_j p_1 = p_2bp_1.
\end{equation*}
Let $d^+$ be the subpath of $(c_i^2)^+$ from $j^+$ to $k^+$.
Then
\begin{equation*}
\operatorname{t}( d^+ ) = \operatorname{t}( b^+ ) = j^+ \ \ \ \text{ and } \ \ \ \operatorname{h}( d^+ ) = \operatorname{h}( b^+) = k^+.
\end{equation*}
In particular, $d^+$ and $b^+$ bound a compact region $\mathcal{R}_{d,b}$.

Since we are free to choose the vertex $i^+$ in $\mathcal{R}^{\circ}_{ap,qa}$, we may suppose $\mathcal{R}^{\circ}_{d,b}$ contains no vertices.
Furthermore, $c_i^+$ and $p^+$ have no cyclic subpaths since $\sigma \nmid r$, by Lemma \ref{cyclelemma}.1.
Thus, their respective subpaths $d^+$ and $b^+$ have no cyclic subpaths.
Whence $d = b$, by Lemma \ref{4.13.2}.

Furthermore, since $\mathcal{R}^{\circ}_{ap_2dp_1,qa}$ contains less vertices than $\mathcal{R}^{\circ}_{ap,qa}$, it follows by induction that
\begin{equation*}
a p_2dp_1 = qa.
\end{equation*}
Therefore
\begin{equation*}
ap = a (p_2bp_1) = a (p_2dp_1) = qa,
\end{equation*}
proving our claim.

Finally, since $a \in Q_1$ was arbitrary, the sum $\sum_{i \in Q_0} c_i$ is central in $A$.

(2) Let $s \in S$ be a monomial.
By Lemma \ref{big enough}, there is an $N \geq 0$ such that $s^m \sigma^N$ is in $R$ for each $m \geq 1$.
Fix $m \geq 1$ and set $r: = s^m \sigma^N$.
Then, for each $i \in Q_0$, there is a cycle $c_i \in e_i A e_i$ for which $\overbar{c}_i = r$.

Fix an arrow $a \in Q_1$.
Set $i := \operatorname{t}(a)$ and $j := \operatorname{h}(a)$.
Let $t^+$ be a path in $Q^+$ from $\operatorname{h}( (ac_i)^+)$ to $\operatorname{t}( (ac_i)^+)$.
Then by Lemma \ref{p sigma = q sigma}, there is some $\ell, m_i, m_j \geq 0$ such that
\begin{equation*}
tc_ja\sigma_i^{\ell} = \sigma_j^{m_i} \ \ \ \text{ and } \ \ \ ac_i t \sigma_j^{\ell} = \sigma_j^{m_j}.
\end{equation*}
Thus,
\begin{equation*}
\sigma^{m_i} = \bar{\tau}_{\psi}(tc_ja\sigma_i^{\ell}) = \overbar{t}  \overbar{c}_j \overbar{a} \sigma^{\ell}
= \overbar{a}  \overbar{c}_i \overbar{t} \sigma^{\ell}
= \bar{\tau}_{\psi}(ac_it \sigma_j^{\ell}) = \sigma^{m_j}.
\end{equation*}
Furthermore, $\sigma \not = 1$ since $\bar{\tau}$ is injective by Lemma \ref{from B}.2.
Whence $m := m_i = m_j$ since $k[\mathcal{S}']$ is an integral domain.
Therefore,
\begin{equation} \label{aci}
a \left( c_i \sigma_i^{m} \right) = a c_i\left(tc_j a \sigma_i^{\ell} \right) \stackrel{\textsc{(i)}}{=} \left( ac_it\sigma_j^{\ell} \right)c_ja = \sigma_j^{m} c_ja
\stackrel{\textsc{(ii)}}{=} \left(c_j \sigma_j^{m} \right) a,
\end{equation}
where (\textsc{i}) and (\textsc{ii}) hold by Lemma \ref{unit cycle lemma}.

For each $a \in Q_1$ there is an $m = m(a)$ such that (\ref{aci}) holds.
Set
\begin{equation*}
n := \operatorname{max} \{ m(a) \ | \ a \in Q_1 \}.
\end{equation*}
Then (\ref{aci}) implies that the element $\sum_{i \in Q_0} c_i \sigma_i^n$ is central.
Furthermore, for each $k \in Q_0$,
\begin{equation*}
\bar{\tau}_{\psi}(c_k \sigma_k^n) = r\sigma^n = s^m \sigma^{N + n}.
\end{equation*}
The claim then follows since $m \geq 1$ was arbitrary. 

(3) By Claim (1), it suffices to suppose $\sigma \mid r$.
Then there is a monomial or scalar $g \in k[\mathcal{S}']$ such that $r = g \sigma$.
By Lemma \ref{t in S}, $g$ is in $S$ since $g \sigma = r \in R \subset S$.
Therefore, by Claim (2), there is some $n \geq 1$ such that
\begin{equation*}
r^n = g^n \sigma^n \in \hat{Z}.
\end{equation*}
\end{proof}

\begin{figure}
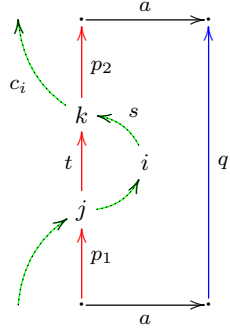

\begin{equation*}
\xy 0;/r.5pc/:
(-6,-9)*{}="1";(-2,-9)*{\cdot}="2";(6,-9)*{\cdot}="3";
(-2,-3)*+{\text{\scriptsize{$j$}}}="4";(2,0)*+{\text{\scriptsize{$i$}}}="5";
(-2,3)*+{\text{\scriptsize{$k$}}}="6";
(-6,9)*{}="7";(-2,9)*{\cdot}="8";(6,9)*{\cdot}="9";
{\ar_{a}"2";"3"};{\ar^{a}"8";"9"};
{\ar_{p_1}@[red]"2";"4"};{\ar^{b}@[red]"4";"6"};{\ar_{p_2}@[red]"6";"8"};
{\ar_{q}@[blue]"3";"9"};
{\ar@/^/^{}@[teal]"1";"4"};{\ar@/_/@[teal]"4";"5"};
{\ar@/_/_{d}@[teal]"5";"6"};{\ar@/^/^{c_i}@[teal]"6";"7"};
\endxy
\end{equation*}
\caption{Setup for Proposition \ref{r in R and}.1.
The cycles $p = p_2tp_1$, $q$, $c_i$, are drawn in red, blue, and teal respectively.
The path $a$ is an arrow.}
\label{presentism?}
\end{figure}

\begin{Theorem} \label{integral closure theorem}
The normalizations of the reduced and ghor centers are isomorphic,
\begin{equation*}
\bar{Z} \cong \bar{R}.
\end{equation*}
\end{Theorem}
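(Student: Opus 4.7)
The plan is to realize $\bar Z$ and $\bar R$ as integral closures inside a common field of fractions, then invoke transitivity of integral extensions. Concretely I will establish (i) $\operatorname{Frac}(\hat Z) = \operatorname{Frac}(R)$, and (ii) $R$ is integral over $\hat Z$; together with the inclusion $\hat Z \subseteq R$ these will force $\bar Z = \bar R$.

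The key preliminary observation is that $\sigma \in \hat Z$: by Lemma \ref{from B}.3 the element $\sum_{i \in Q_0}\sigma_i \in A$ is central, and $\bar\psi$ sends it to $\sigma$; hence $\sigma^N \in \hat Z$ for every $N \geq 0$. Now let $g \in R$ be any monomial. Since $R \subseteq S$, Proposition \ref{r in R and}.2 produces $N \geq 0$ with $g\sigma^N \in \hat Z$, whence
$$g \;=\; \frac{g\sigma^N}{\sigma^N} \;\in\; \operatorname{Frac}(\hat Z).$$
Because $R$ is generated as a $k$-algebra by its monomials, this gives $R \subseteq \operatorname{Frac}(\hat Z)$, and combined with $\hat Z \subseteq R$ this yields (i).

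For (ii), each monomial $g \in R$ satisfies the monic relation $x^N - g^N = 0$ over $\hat Z$ by Proposition \ref{r in R and}.3, and hence is integral over $\hat Z$. Since the elements of $\operatorname{Frac}(R)$ that are integral over $\hat Z$ form a subring containing every monomial generator of $R$, all of $R$ is integral over $\hat Z$. Interpreting $\bar Z$ and $\bar R$ as integral closures inside the common field $\operatorname{Frac}(\hat Z) = \operatorname{Frac}(R)$, the inclusion $\hat Z \subseteq R$ gives $\bar Z \subseteq \bar R$, while transitivity of integrality applied to $\hat Z \subseteq R \subseteq \bar R$ gives $\bar R \subseteq \bar Z$, completing the proof.

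No serious obstacle remains once Proposition \ref{r in R and} is in hand; the only mild novelty is recognizing that $\sigma$ already belongs to $\hat Z$, not merely to $R$, so that the auxiliary factor $\sigma^N$ produced by part (2) of that proposition can be cancelled inside $\operatorname{Frac}(\hat Z)$.
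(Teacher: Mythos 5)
Your proof is correct and follows the same core strategy as the paper's: use Proposition \ref{r in R and}.3 to show each $r \in R$ is a root of the monic polynomial $x^N - r^N \in \hat{Z}[x]$, hence integral over $\hat{Z}$, and combine this with $\hat{Z} \subseteq R$ to sandwich the normalizations via transitivity of integral dependence. The one genuine point of departure is how you obtain $\operatorname{Frac}\hat{Z} = \operatorname{Frac}R$: the paper cites \cite[Theorem 1.1]{B6} for this equality, whereas you derive it internally from Proposition \ref{r in R and}.2 by first noting that $\sigma \in \hat{Z}$ (since $\sum_{i \in Q_0}\sigma_i$ is central by Lemma \ref{from B}.3 and $\bar{\psi}$ sends it to $\sigma$), so any monomial $g \in R \subseteq S$ can be written as $(g\sigma^N)/\sigma^N$ with both numerator and denominator in $\hat{Z}$. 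This is a small but worthwhile tightening: it makes the theorem self-contained relative to the present article and removes the dependence on an external (in-preparation) reference, at no extra cost.
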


\begin{proof}
By \cite[Theorem 1.1]{B6}, the fraction fields of $\hat{Z}$, $R$, and $S$ coincide, 
\begin{equation} \label{FracZ}
\operatorname{Frac}\hat{Z} = \operatorname{Frac}R = \operatorname{Frac}S.
\end{equation}
Thus the inclusion $\hat{Z} \subseteq R$ implies
\begin{equation} \label{Z red subseteq}
\bar{Z} \subseteq \bar{R}.
\end{equation}

To show $\bar{Z} \supseteq \bar{R}$, consider $r \in R$.
Then there is some $n \geq 1$ such that $r^n \in \hat{Z}$, by Proposition \ref{r in R and}.3.
Whence, $r$ is a root of the monic polynomial
\begin{equation*}
x^n - r^n \in \hat{Z}[x].
\end{equation*}
Thus $r$ is in $\bar{Z}$.
Therefore
\begin{equation} \label{R subseteq overline}
R \subseteq \bar{Z}.
\end{equation}
But then
\begin{equation*}
\bar{R} \stackrel{\textsc{(i)}}{\subseteq} \bar{Z} \stackrel{\textsc{(ii)}}{\subseteq} \bar{R},
\end{equation*}
where (\textsc{i}) holds by (\ref{R subseteq overline}) and (\textsc{ii}) holds by (\ref{Z red subseteq}).
Therefore $\bar{R} \cong \bar{Z}$.
\end{proof}

\begin{Proposition} \label{ngu3}
If $s \in S \setminus R$ is a monomial and $\sigma \nmid s$, then $s^n$ is not in $R$ for each $n \geq 1$.
Moreover, there exists such a monomial $s$.
\end{Proposition}

\begin{proof}
Holds by \cite[Proposition 3.14]{B4} since $A$ is nonnoetherian.
\end{proof}

\begin{Proposition} \label{ngu4}
Let $s \in S \setminus R$ be a monomial.
Then $\sigma \mid s$ if and only if $s \in \bar{R}$. 
\end{Proposition}

\begin{proof}
First suppose ${\sigma \mid s}$. 
Then $s = g \sigma$ for some monomial $g \in k[\mathcal{S}']$.
Whence $g$ is in $S$, by Lemma \ref{t in S}.
Thus for sufficiently large $n \geq 1$, $(g \sigma)^n$ is in $R$, by Lemma \ref{big enough}. 
But then $x^n - (g\sigma)^n$ is in $R[x]$.
Furthermore, 
\begin{equation*}
g \sigma \in S \subset \operatorname{Frac}S = \operatorname{Frac}R,
\end{equation*}
where the last equality holds by (\ref{FracZ}). 
Consequently, $s = g \sigma$ is in the normalization $\bar{R}$.

Now suppose ${\sigma \nmid s}$. 
Then $s^n$ is in not in $R$ for each $n \geq 1$, by Proposition \ref{ngu3}.
Furthermore, since $R$ is generated by monomials in a polynomial ring (in particular, $R$ is toric), each monomial in $\bar{R}$ is a root of a monic binomial $x^n - r \in R[x]$ for some $n \geq 1$ and $r \in R$. 
But then $s$ is not in $\bar{R}$ since $s^n$ is not in $R$ for each $n \geq 1$.
\end{proof}

\begin{Theorem} \label{nonnoetherian theorem}
The normalizations $\bar{R} \cong \bar{Z}$ are nonnoetherian and properly contained in the cycle algebra $S$.
\end{Theorem}

\begin{proof}
Since $A$ is nonnoetherian, there is a monomial $s \in S \setminus R$ for which $\sigma \nmid s$, by Proposition \ref{ngu3}. 
Thus $s$ is not in $\bar{R}$, by Proposition \ref{ngu4}.
Whence $\bar{R} \not = S$.
But $S = S' = R'$ is the center of the noetherian (equivalently, cancellative) dimer algebra $A'$ and so is normal.
Consequently, the inclusion $R \subset S$ implies $\bar{R} \subseteq \bar{S} = S$.
Therefore $\bar{R}$ is properly contained in $S$.

Moreover, since $\sigma = \prod_{x \in \mathcal{S}'}x$, we also have $\sigma \nmid s^n$ for each $n \geq 1$.
Thus, $s^n$ is not in $\bar{R}$ for each $n \geq 1$, again by Proposition \ref{ngu4}.
It follows that $\bar{R}$ is nonnoetherian by \cite[Claims (i) and (iii) in the proof of Theorem 3.16]{B4}. 

Alternatively, recall that an element $s$ of the fraction field $\operatorname{Frac}T$ of an integral domain $T$ is said to be `almost integral over $T$' if there is some nonzero $t \in T$ such that $s^m t$ is in $T$ for all $m \geq 0$. 
It is well known that if $T$ is noetherian, then every almost integral element $s \in \operatorname{Frac}T$ over $T$ is integral over $T$ \cite[Lemma 10.37.4]{S}.

Now let $s \in S \setminus \bar{R}$ be as above with $\sigma \nmid s$.
Then $s \in \operatorname{Frac}S = \operatorname{Frac}R = \operatorname{Frac}\bar{R}$, by (\ref{FracZ}). 
Thus, for $n \geq 1$ sufficiently large, $s^m \sigma^n \in R \subseteq \bar{R}$ for all $m \geq 0$, by Lemma \ref{big enough}.
Whence, $s$ is almost integral over both $R$ and $\bar{R}$.
But $s \not \in \bar{R}$, so $s$ is not integral over $R$ or $\bar{R}$.
Therefore both $R$ and $\bar{R}$ are nonnoetherian.
\end{proof}

\section{Three characterizations of normality}

Let $Z$ and $S$ be the center and cycle algebra of a nonnoetherian dimer algebra $A = kQ/I$, let $\psi: A \to A'$ be a cyclic contraction, and let $R \subset S$ be the center of the ghor algebra $\Lambda$ of $Q$.
In this section we will introduce three equivalent conditions for $R$ to be normal.
These conditions provide an explicit description of the reduced center $\hat{Z} = Z/\operatorname{nil}Z$ if it is normal.

\begin{Lemma} \label{st not in R}
Suppose $r \in R$ and $s \in S$ are monomials.
If $r \not = \sigma^n$ for any $n \geq 1$, then $rs \in R$.
\end{Lemma}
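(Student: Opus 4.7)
My plan is to show $gh \in R$ by exhibiting, for each vertex $j \in Q_0$, a cycle in $e_j A e_j$ whose $\bar{\tau}_\psi$-image equals $gh$. Since $h \in S$ is a monomial, I would first decompose $h = \prod_{\ell} \bar{\tau}_\psi(r_{\ell})$ as a product of $\bar{\tau}_\psi$-images of cycles at various vertices and reduce by induction on the number of factors to the core case $h = \bar{\tau}_\psi(r)$ for a single cycle $r$ at some vertex $k$. In the inductive step, a partial product $g h'$ either remains a non-$\sigma$-power (so the induction applies directly) or happens to equal some $\sigma^n$, in which case $gh' \in R$ trivially and it suffices to reduce the remaining factors against $\sigma^n$.

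Because $g \in R$ there is a cycle $c_i \in e_i A e_i$ with $\bar{\tau}_\psi(c_i) = g$ at every vertex $i$, and since $g \neq \sigma^n$ for all $n \geq 1$, Lemma \ref{cyclelemma}.1 forces $c_i \in \mathcal{C}^u_i$ with $u \neq 0$; by Lemma \ref{r in T'2} this winding $u$ is independent of $i$. At $j = k$ the concatenation $c_k \, r$ is a cycle at $k$ with image $gh$, so $gh$ is realized there. For $j \neq k$ I would exploit the nontrivial winding: the lift $c_j^+$ is a path in $Q^+$ from $j^+$ to $j^+ + u$ travelling in the direction $u \neq 0$, and the lattice translates $k^+ + \mathbb{Z}^2$ of $k^+$ are spread throughout $\mathbb{R}^2$. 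After possibly rerouting, one locates a translate $k^+ + w$ of $k^+$ on the path of $c_j^+$, factors $c_j = c_j^{(2)} \, e_{k} \, c_j^{(1)}$ at the corresponding vertex, and inserts the translated cycle $r^+ + w$ (which projects to a cycle at $k$ in $A$ with image $h$) to obtain a cycle $c_j^{(2)} \, r' \, c_j^{(1)} \in e_j A e_j$ whose image is $gh$.

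The main obstacle is arranging the insertion so that the resulting image is exactly $gh$ rather than $g^N h$ for some $N \geq 2$; a naive iteration $c_j^N$ to guarantee meeting a lift of $k$ would overshoot by a factor of $g^{N-1}$. I expect to handle this in the spirit of the proof of Proposition \ref{r in R and}.1: choose $c_j$ to minimize the region relevant to the construction, and use Lemma \ref{generalize} to replace a vertex-free subregion of the lift of $c_j$ by an equivalent subpath that passes through the chosen lift of $k$. Because the intermediate region contains no vertices, such a replacement preserves the $\bar{\tau}_\psi$-image, and a single (non-iterated) $c_j$ then suffices for the one-shot insertion of $r$.
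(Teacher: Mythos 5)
Your plan differs substantially from the paper's, and it has a genuine gap in the case $j \neq k$.

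The paper does not decompose $h$ into factors, does not induct, and does not try to force the cycle realizing $g$ to pass through a \emph{predetermined} vertex $k$. Instead it picks, for each fixed $i \in Q_0$, a cycle $p \in \mathcal{C}^u_i$ with $\overbar{p} = g$ (and $u \neq 0$ by Lemma~\ref{cyclelemma}.1), and a cycle $q \in \mathcal{C}^v$ (at whatever vertex) with $\overbar{q} = h$. It then splits into three cases on $v$: if $v = 0$ then $h = \sigma^m \in R$ by Lemma~\ref{cyclelemma}.1, so $gh \in R$; if $v = u$ then $\overbar{p} = \overbar{q}$ by a cited lemma of [B2], giving $h \in R$; and if $v \notin \{0,u\}$ the lifts $p^+$, $q^+$ are transverse, so $p$ and $q$ \emph{necessarily cross at some vertex} $j$ on the torus. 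One then factors $p = p_2 e_j p_1$ and $q = q_2 e_j q_1$ and forms the spliced cycle $p_2 q_1 q_2 p_1 \in e_i A e_i$ with image $gh$. The whole point is that the intersection vertex is found where the two cycles \emph{already} meet; no rerouting of $c_j$ toward a fixed $k$ is required.

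Your proposal hinges on being able to reroute $c_j$ (keeping $\bar{\tau}_\psi$-image $g$ and without iterating) so that it passes through some lift of the specific vertex $k$ at which you placed the cycle $r$. There is no reason this is possible: the lift $c_j^+$ is a path of bounded extent between $j^+$ and $j^+ + u$, and nothing forces it — or any image-preserving deformation of it — to hit a translate of $k^+$. Lemma~\ref{generalize} cannot supply the deformation, since it only replaces a subpath across a region with \emph{no interior vertices}; the region separating $c_j^+$ from a translate of $k^+$ may well contain vertices, and then the lemma says nothing. You also implicitly assume that a cycle at $j$ realizing $g$ and a cycle realizing $h$ must meet somewhere, but this fails precisely when the winding of $r$ is $0$ or $u$ (parallel to that of $c_j$), which are exactly the cases the paper disposes of separately before invoking transversality. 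In short, the missing idea is: do not pin $h$ to a particular vertex in advance; realize $h$ by a cycle $q$ of whatever winding it comes with, handle the parallel-winding cases directly, and in the transverse case use the fact that transverse cycles on the torus must intersect at a quiver vertex, then splice there.
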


\begin{proof}
Suppose the hypotheses hold. 
Fix $i \in Q_0$.
Since $r$ is a monomial in $R$ and $s$ is a monomial in $S$, there are $u,v \in \mathbb{Z}^2$ and cycles
\begin{equation*}
p \in \mathcal{C}_i^u \ \ \ \text{ and } \ \ \ q \in \mathcal{C}^v
\end{equation*}
such that
\begin{equation*}
\overbar{p} = r \ \ \ \text{ and } \ \ \ \overbar{q} = s.
\end{equation*}

The assumption $r \not = \sigma^n$ for $n \geq 1$ implies $u \not = 0$, by Lemma \ref{cyclelemma}.1.
Furthermore, if $v = 0$, then $\overbar{q} = \sigma^m$ for some $m \geq 1$, again by Lemma \ref{cyclelemma}.1. 
But then $s = \overbar{q}$ is in $R$ since $\sigma$ is in $R$.
Whence, $rs$ is in $R$.

We may thus suppose that $u, v$ are both nonzero.

(i) If $v = u$, then $\overbar{p} = \overbar{q}\sigma^m$ for some $m \in \mathbb{Z}$, by Lemma \ref{cyclelemma}.2. 

(i.a) If $m \leq 0$, then ${s = r\sigma^{|m|}}$ is in $R$ since $r$ and $\sigma$ are both in $R$.
Whence, ${rs = r^2\sigma^{|m|}}$ is in $R$.

(i.b) So suppose $m > 0$; then ${r = s \sigma^{|m|}}$.
In particular, ${\sigma \mid r = \overbar{p}}$.
Thus, $p$ is not in $\hat{\mathcal{C}}$, by Lemma \ref{cyclelemma}.3.
Consequently, there is a cyclic subpath $c^+$ of the lift $p^{2+}$.
It suffices to suppose that $p$ factors into paths 
\begin{equation*} \label{p = s2p's1}
p = d_2p'd_1
\end{equation*}
with $c = d_1d_2 \in \mathcal{C}^0$ and $p' \in \hat{\mathcal{C}}$; otherwise, if $p'$ is not in $\hat{\mathcal{C}}$, then repeat the argument with $p'$ in place of $p$.
Since $p'$ is in $\hat{\mathcal{C}}$, we have $\sigma \nmid \overbar{p}'$, by Lemma \ref{cyclelemma}.3.
Furthermore, since $c$ is in $\mathcal{C}^0$, we have $p' \in \mathcal{C}^u$.
Thus $\overbar{p} = \overbar{p}'\sigma^{\ell}$ for some $\ell \in \mathbb{Z}$, by Lemma \ref{cyclelemma}.2.
Whence, 
\begin{equation*}
\overbar{p}' \sigma^{\ell} = \overbar{p} = r = s\sigma^{m}.
\end{equation*}

Without loss of generality we may assume ${\sigma \nmid s}$.
Then, since {$\sigma \nmid \overbar{p}'$}, we have $\ell = m$ and $\overbar{p}' = s$.
Therefore the cycle $d_2p'^2d_1$ has tail at $i$ and $\bar{\tau}_{\psi}$-image $s^2\sigma^m$.
Since $i \in Q_0$ was arbitrary, it follows that $rs = s^2\sigma^m$ is in $R$.

(ii) Finally, suppose $v \not = u$.
Then, since $u,v \not = 0$ (and the surface is a torus), the lifts $p^+$ and $q^+$ intersect at some vertex $j^+$ in $Q^+$. 
Consequently, $p$ and $q$ factor into paths
\begin{equation*}
p = p_2e_jp_1 \ \ \ \text{ and } \ \ \ q = q_2e_jq_1.
\end{equation*}
We may thus form the cycle
\begin{equation*}
c = p_2q_1q_2p_1 \in e_iAe_i
\end{equation*}
with $\bar{\tau}_{\psi}$-image
\begin{equation*}
\overbar{c} = \overbar{p}  \overbar{q} = rs.
\end{equation*}
But $i \in Q_0$ was arbitrary, and so it again follows that $\overbar{c} = rs$ is in $R$. 
\end{proof}

\begin{Proposition} \label{R normal}
A ghor center $R$ is normal if and only if $\sigma S \subset R$.
\end{Proposition}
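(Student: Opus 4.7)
The plan is to prove the two implications separately. The forward direction is a brief integrality argument that relies on Proposition~\ref{r in R and}.2. The backward direction reduces, via a graded argument and Lemma~\ref{t in S}, to a single hard case which I expect to handle by a cycle-theoretic construction at each vertex of $Q$.

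\textbf{Forward direction.} Assume $R$ is normal, and let $s\in S$ be a monomial. By Proposition~\ref{r in R and}.2 there exists $N\geq 0$ with $s^n\sigma^N\in\hat Z\subseteq R$ for every $n\geq 1$. For $n\geq N$,
\[
(\sigma s)^n \;=\; \sigma^{n-N}\cdot(s^n\sigma^N)\;\in\; R,
\]
so $\sigma s$ is a root of the monic polynomial $x^n-(\sigma s)^n\in R[x]$, and hence $\sigma s\in \bar R = R$. Since $S$ is generated as a $k$-algebra by its monomials, $\sigma S\subseteq R$.

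\textbf{Backward direction, easy case.} Assume $\sigma S\subseteq R$ and let $r\in\bar R$. Because $R$ and $S$ are $\mathbb{Z}_{\geq 0}^{|\mathcal{S}'|}$-graded subalgebras of $B$, the integral closure $\bar R$ is graded, and we may assume $r$ is a monomial. Since $S$ is normal (being isomorphic to the center of the cancellative dimer algebra $A'$) and contains $R$, we have $\bar R\subseteq S$, so $r\in S$. If $\sigma\mid r$, then by Lemma~\ref{t in S}, $r/\sigma\in S$, and the hypothesis gives $r=\sigma\cdot(r/\sigma)\in\sigma S\subseteq R$.

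\textbf{Backward direction, hard case (main obstacle).} The remaining case is $\sigma\nmid r$. Extracting the top-multidegree component of a monic integral relation for $r$ over $R$ yields $r^n\in R$ for some $n\geq 1$ with $\sigma\nmid r^n$, so Proposition~\ref{r in R and}.1 places $r^n$ in $\hat Z$, and Proposition~\ref{r in R and}.2 applied to $r\in S$ furnishes some $N\geq 0$ with $r\sigma^N\in\hat Z\subseteq R$. The task is to upgrade these statements to $r\in R$, i.e.\ to produce, at each vertex $i\in Q_0$, a cycle $c_i\in e_iAe_i$ with $\bar\tau_\psi(c_i)=r$. My plan is to start from a cycle $d_i\in e_iAe_i$ with $\bar\tau_\psi(d_i)=\sigma r$ (supplied at every $i$ by the hypothesis $\sigma S\subseteq R$) and then to cancel a unit-cycle factor from $d_i$ to obtain $c_i$. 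The principal difficulty I anticipate is that in the nonnoetherian setting a cycle whose $\bar\tau_\psi$-image is divisible by $\sigma$ need not literally contain a unit-cycle subpath, so the cancellation must be carried out modulo the dimer relations. To make this rigorous I would adapt the region-minimization technique from the proofs of Lemma~\ref{Lucy} and Proposition~\ref{r in R and}.1, use Lemma~\ref{r in T'2} to force every chosen $d_i$ into the single homology class of $r$ (which is nonzero by Lemma~\ref{cyclelemma}.1 since $\sigma\nmid r$), and use the centrality of $r^n\in\hat Z$ to make the local cancellations patch together across vertices into a genuine element of $R$.
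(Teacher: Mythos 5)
The forward direction of your proof (normal $\Rightarrow$ $\sigma S \subseteq R$) is correct, and it parallels the paper's contrapositive argument: both exhibit, for each monomial $s \in S$, an element of $\operatorname{Frac}R \cap S$ satisfying a monic binomial over $R$; you supply the needed $(\sigma s)^n \in R$ via Proposition~\ref{r in R and}.2 rather than via the direct observation that $\sigma^m S \subset R$ for $m$ large (the paper's route, which is later isolated as Lemma~\ref{sigma n}). Your backward direction correctly reduces, via the fine $\mathbb{Z}_{\geq 0}^{|\mathcal{S}'|}$-grading of $B$, to a monomial $r \in \bar{R} \subseteq S$, and the case $\sigma \mid r$ is handled cleanly by Lemma~\ref{t in S} together with the hypothesis.

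The genuine gap is the case $\sigma \nmid r$. Extracting the top-multidegree component of a monic integral relation only gives $r^{\ell} \in R$ for some $\ell \geq 1$, with no control on $\ell$; you then need ``$r^{\ell} \in R$, $\sigma \nmid r$ $\Rightarrow$ $r \in R$,'' and your proposed route --- pick cycles $d_i$ with $\bar{\tau}_\psi(d_i) = \sigma r$ at every vertex and cancel a unit-cycle factor --- is not carried out and is not on solid ground. Remark~\ref{first remark} and Figure~\ref{holy smokes batman} exhibit exactly the obstruction you worry about: a cycle whose $\bar{\tau}_\psi$-image is divisible by $\sigma$ need not factor off a unit cycle even modulo $I$, and in general distinct cycles at a vertex with the same $\bar{\tau}_\psi$-image need not be related by any elementary region-minimization move, so nothing ensures the putative $c_i = d_i \sigma_i^{-1}$ exists as a genuine cycle at $i$. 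The paper takes an entirely different tack here: for an arbitrary $s \in S \setminus R$ it picks a monomial summand $s_k \notin R$ of maximal degree, uses $\sigma S \subset R$ to deduce $\sigma \nmid s_k$ and hence (by a cited result of \cite{B4}) $s_k^n \notin R$, and then runs a degree-counting argument through Lemma~\ref{st not in R} to show that no monic integral relation for $s$ over $R$ can exist. That argument never needs to construct cycles and so sidesteps the combinatorial obstruction you flagged. As written, your proof establishes one implication and half of the other; the remaining half is an announced plan, not a proof, and the plan as stated would need a substantial new lemma about unit-cycle cancellation modulo $I$ that the paper neither states nor uses.
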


\begin{proof}
(1) First suppose $\sigma S \subset R$.

It is well known that cancellative dimer algebras (on a torus) are noncommutative crepant resolutions, and in particular that their centers are normal domains (e.g., \cite{Br,D}).
Moreover, $A'$ is cancellative and its center is isomorphic to $S$ \cite[Theorem 1.1.3]{B2}.
Thus, $S$ is a normal domain. 
Therefore, since $R$ is a subalgebra of $S$, we have
\begin{equation} \label{R in S normal}
\bar{R} \subseteq S.
\end{equation}

Now let $s \in S \setminus R$.
We claim that $s$ is not in $\bar{R}$.
Indeed, assume otherwise.
Since $S$ is generated by monomials in the polynomial ring $k[\mathcal{S}']$, there are monomials $s_1, \ldots, s_{\ell} \in S$ and scalars $s_0, c_1, \ldots, c_{\ell} \in k$ such that
\begin{equation*}
s = s_0 + c_1s_1 + \cdots + c_{\ell}s_{\ell}.
\end{equation*}
Since $s \not \in R$, there is some $1 \leq k \leq \ell$ such that $s_k \not \in R$.
Choose $s_k$ to have maximal degree among the subset of monomials in $\{ s_1, \ldots, s_{\ell} \}$ which are not in $R$.

If ${\sigma \mid s_k}$ in $k[\mathcal{S}']$, then there would be a monomial $g \in k[\mathcal{S}']$ such that $s_k = \sigma g$.
Furthermore, $g$ would be in $S$ by Lemma \ref{t in S}.
Whence, $s_k = \sigma g$ would be in $R$ by our assumption that $\sigma S \subset R$.
But this is not possible since $s_k$ is not in $R$.
Therefore
\begin{equation*}
\sigma \nmid s_k.
\end{equation*}

By assumption $s$ is in $\bar{R}$, and so there is some $n \geq 1$ and $r_0, \ldots, r_{n-1} \in R$ for which
\begin{equation} \label{sn + r}
s^n + r_{n-1}s^{n-1} + \cdots + r_1s = -r_0 \in R.
\end{equation}
The summand $s_k^n$ of $s^n$ is not in $R$ since $\sigma \nmid s_k$ \cite[Proposition 3.14]{B4}.
Thus $-s_k^n$ is a summand of the left-hand side of (\ref{sn + r}).
In particular, for some $1 \leq m \leq n$, there are monomial or scalar summands $r'$ of $r_m$ and $s' = s_{j_1} \cdots s_{j_m}$ of $s^m$, and a nonzero scalar $c \in k$, such that
\begin{equation*}
r's' = c s_k^n.
\end{equation*}

Since $\sigma \nmid s_k$, we have $\sigma \nmid s_k^n$, and thus $\sigma \nmid r'$.
Whence $r' \not = \sigma^m$ for any $m \geq 1$.
Thus, $r'$ is a nonzero scalar since $r' \in R$, $s' \in S$, and $s_k^n \not \in R$, by Lemma \ref{st not in R}. 
Therefore
\begin{equation} \label{impossible}
s_{j_1} \cdots s_{j_m} = s' = (c/r') s_k^n.
\end{equation}
Consequently, $s_{j_1}, \ldots, s_{j_m}$ is not in $R$.
But $m \leq n$ and the monomial $s_k$ was chosen to have maximal degree, and so (\ref{impossible}) is not possible.
Hence,
\begin{equation} \label{R cap S}
\bar{R} \cap S \subseteq R.
\end{equation}

It follows from (\ref{R in S normal}) and (\ref{R cap S}) that
\begin{equation*}
\bar{R} = \bar{R} \cap S \subseteq R \subseteq \bar{R}.
\end{equation*}
Therefore $R = \bar{R}$ is normal.

(2) Now suppose $\sigma S \not \subset R$.
Then there is a monomial $s \in S \setminus R$ for which $\sigma \mid s$.
Furthermore, $s$ is in $\bar{R}$ by Proposition \ref{ngu4}. 
Consequently, $s$ is in $\bar{R} \setminus R$ and so $R \not = \bar{R}$. 
\end{proof}

\begin{Corollary} \
\begin{enumerate}
 \item If the head or tail of each contracted arrow has indegree 1, then $R$ is normal.
 \item If $\psi$ contracts precisely one arrow, then $R$ is normal.
\end{enumerate}
\end{Corollary}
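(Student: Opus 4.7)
By Proposition \ref{R normal}, in both cases it suffices to show $\sigma S \subseteq R$; equivalently, that for every monomial $g \in S$ and every vertex $i \in Q_0$ there is a cycle $c_i \in e_iAe_i$ with $\bar{\tau}_\psi(c_i) = \sigma g$. Since $\sigma g \in S$, there is a vertex $j \in Q_0$ and a cycle $\gamma \in e_jAe_j$ with $\bar{\tau}_\psi(\gamma) = \sigma g$, so the problem reduces to transporting this realization from $j$ to each other vertex $i$.

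For (2), where $\psi$ contracts a single arrow $a: u \to v$, the quiver $Q'$ is obtained from $Q$ by identifying $u$ and $v$ (up to removal of $2$-cycles). Any cycle in $A'$ then lifts essentially uniquely to a path in $A$ via the bijection between $Q_1 \setminus \{a\}$ and the uncontracted arrows of $Q_1'$, with either $a$ or the complementary arc $c'$ (defined by $a c' = \sigma_v$) inserted wherever two successive lifted arrows would otherwise have heads and tails in $\{u,v\}$ that fail to match. My plan is to apply this lifting to a realization of $\sigma g$ in $A'$ at $\psi(i)$ to produce a cycle $c_i \in e_iAe_i$; the overhead contributed by the insertions is at most a single factor of $\sigma$ under the single-arrow hypothesis, and this is absorbed by the factor already present in $\sigma g$.

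For (1), I would sharpen the proof of Proposition \ref{r in R and}.2. In that argument, an integer $n(a) \geq 1$ is associated to each arrow $a \in Q_1$ such that $a(c_k \sigma_k^{n(a)}) = (c_l \sigma_l^{n(a)}) a$ for cycles $c_k$, $c_l$ of common $\bar\tau_\psi$-image at the tail and head of $a$, and $N := \max_a n(a)$ produces $g \sigma^N \in \hat Z$. I would show that the indegree-$1$ hypothesis forces $n(a) = 1$ for every $a$: when $a$ is contracted with $\operatorname{h}(a) = v$ of indegree $1$, the arrow $a$ is the unique arrow into $v$, so every cycle at $v$ factors as $a c'$ with $c'a$ a cycle at $u = \operatorname{t}(a)$ of equal $\bar\tau_\psi$-image, and the closing path $r^+$ in the proof of Proposition \ref{r in R and}.2 can be chosen to traverse exactly one unit cycle; a symmetric argument applies when $\operatorname{t}(a)$ has indegree $1$. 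With $N = 1$, this yields $g \sigma \in \hat Z \subseteq R$ for every $g \in S$, hence $\sigma S \subseteq R$.

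The main obstacle in both parts is controlling the transport overhead so that it never exceeds a single factor of $\sigma$; the hypotheses of the corollary are calibrated precisely to force this bound. The delicate combinatorial step in (1) is verifying that the indegree-$1$ hypothesis on contracted arrows suffices to bound $n(a)$ uniformly over \emph{all} arrows (including uncontracted ones) by $1$, which requires a careful analysis of how the closing path $r^+$ can be routed through the unique arrow into the constrained vertex. For (2), the crucial point is that the lift of any cycle in $A'$ through a single contracted arrow cannot accumulate more than one factor of $\sigma$ in overhead.
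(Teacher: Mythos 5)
Your reduction to showing $\sigma S \subset R$ and then invoking Proposition~\ref{R normal} is exactly the paper's approach (the paper then dismisses the verification with ``clearly $\sigma S \subset R$''). However, the details you sketch do not yet close the argument in either part.

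For part~(2), the claim that lifting a cycle from $A'$ to $A$ through a single contracted arrow $a\colon u \to v$ incurs at most one factor of $\sigma$ is unjustified. A cycle in $Q'$ can pass through the merged vertex $\psi(u) = \psi(v)$ arbitrarily many times, and every ``mismatched'' passage (arriving along an arrow lifting with head $v$ and departing along an arrow lifting with tail $u$) forces the insertion of a complementary arc of $a$, each such insertion contributing a full factor of $\sigma$ to the $\bar{\tau}_{\psi}$-image. Because the lift must close up, the number of complementary-arc insertions equals the number of $a$-insertions, and neither count is a priori bounded by $1$. The remark that the extra $\sigma$ is ``absorbed by the factor already present in $\sigma g$'' does not help: if the lift has image $\sigma^{1+k}g$ with $k \geq 1$, that is simply a different monomial. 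You would need to show that the realizing cycle in $A'$ can always be chosen so that at most one mismatched passage occurs, and no argument for this is given.

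For part~(1), there is a mismatch between the hypothesis and the quantity you try to bound. In the proof of Proposition~\ref{r in R and}.2 one sets $N := \max\{n(a) : a \in Q_1\}$ over \emph{all} arrows of $Q$, not just the contracted ones, so showing $n(a) = 1$ only for contracted $a$ does not yield $N = 1$. The corollary's hypothesis says nothing about the indegree of vertices incident to uncontracted arrows, and you give no reason why $n(a) \leq 1$ should hold for those. Moreover, the assertion that the closing path $r^+$ ``can be chosen to traverse exactly one unit cycle'' under the indegree-$1$ hypothesis is stated but not argued, and the integer $n(a)$ in that proof is determined by the interplay of $r$, $c_i$, and $c_j$, not by local indegree alone. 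To make the argument work you would need a uniform bound on $n(a)$ for all $a$, or an entirely different route to $\sigma S \subset R$ that localizes the obstruction to the contracted arrows.
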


\begin{proof}
In both cases (1) and (2), clearly $\sigma S \subset R$.
\end{proof}

\begin{Proposition} \label{R is not normal}
For each $n \geq 1$, there exist ghor algebras for which (\ref{nested}) holds.
Consequently, there are ghor algebras whose centers are not normal.
\end{Proposition}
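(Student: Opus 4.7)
The plan is to furnish explicit cyclic contractions realizing each value of $n$; the non-normality assertion then follows at once from Proposition \ref{R normal} as soon as one has an example with $n \geq 2$. The case $n = 1$ is the normality case, witnessed by any cyclic contraction for which $\sigma S \subseteq R$, for instance Figure \ref{deformation figure0}.

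The base case $n = 2$ is the crux. I would use the cyclic contraction $\psi : A \to A'$ in Figure \ref{holy smokes batman}.ii, already singled out in Remark \ref{first remark} for exhibiting a cycle $r$ (drawn in red) that is not expressible modulo $I$ as a product of unit cycles. First I would confirm cyclicity ($S = S'$) by comparing vertex-corner $\bar\tau_\psi$-images in $Q$ with $\bar\tau$-images in $Q'$. The inclusion $\sigma^2 S \subseteq R$ would follow from the same representative argument as in Lemma \ref{sigma n}: for every vertex $i \in Q_0$ and every monomial $s \in S$, some representative of $\sigma_i^2$ visits every vertex of $Q$, so concatenation with a cycle representing $s$ produces an element of $e_i A e_i$ with image $\sigma^2 s$. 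The decisive step is the strict non-inclusion $\sigma S \not\subseteq R$. By inspecting Figure \ref{holy smokes batman}.ii one has $\sigma \mid \bar\tau_\psi(r)$, and Lemma \ref{t in S} then supplies a monomial $s \in S$ with $\bar\tau_\psi(r) = \sigma s$. One fixes an appropriate vertex $i$, enumerates the finitely many cycles in $e_i A e_i$ whose $\bar\tau_\psi$-image equals $\sigma s$, and verifies --- in the style of Figure \ref{Yoohoo} --- that none of them satisfies the compatibility with other vertex corners required by (\ref{block universe}), so that $\sigma s \notin \cap_j \bar\tau_\psi(e_j A e_j) = R$.

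For general $n \geq 2$, I would iterate the construction by nesting $n - 1$ analogues of the red obstruction region of Figure \ref{holy smokes batman}.ii inside a larger torus quiver, each contributing one additional mandatory factor of $\sigma$. The upper bound $\sigma^n S \subseteq R$ would again follow from a stacked unit-cycle representative argument, now applied $n$ times at the ``deepest'' vertex.

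The principal obstacle is the sharpness of the lower bound $\sigma^{n-1} S \not\subseteq R$: it requires a cycle-by-cycle enumeration at a chosen vertex, considerably more intricate than the $n = 2$ analysis, to confirm that no fewer than $n$ unit cycles suffice to lift the chosen witness monomial into $R$. A secondary bookkeeping task is ensuring that each enlarged quiver remains a nondegenerate dimer quiver on the torus and that the associated $\psi$ remains cyclic as $n$ grows.
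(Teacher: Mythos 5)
Your overall structure matches the paper's --- exhibit a family of dimer algebras satisfying (\ref{nested}) for each $n$, then invoke Proposition \ref{R normal} to conclude non-normality for $n \geq 2$ --- but you choose a different example family than the author. The paper's proof uses the conifold quiver with $n$ nested squares: Figure \ref{holy smokes batman}.i is the $n=1$ instance, the proof begins ``Recall the conifold quiver $Q$ with one nested square given in Figure \ref{holy smokes batman}.i,'' and then the conifold with $n$ nested squares is asserted to satisfy (\ref{nested}) for each $n$. You instead take Figure \ref{holy smokes batman}.ii as your $n=2$ base (a quiver whose contraction target has three vertices rather than the two of the conifold) and propose to iterate its red obstruction region for higher $n$. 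Both are nesting constructions, but the conifold family is the canonical choice: each additional nested square cleanly forces one extra mandatory factor of $\sigma$ on any cycle in a corner ring at an innermost vertex, so (\ref{nested}) holds uniformly across the family without a separate argument at each level. For your Figure \ref{holy smokes batman}.ii base you would still have to verify $\sigma S \not\subseteq R$ from scratch; Remark \ref{first remark} and the caption of Figure \ref{holy smokes batman} record only that the red cycle is not a product of unit cycles modulo $I$, and the caption in fact distinguishes example (i) from example (ii) precisely by whether $\psi$ sends that cycle to a unit cycle --- it says nothing about (\ref{nested}) for example (ii). In fairness, the paper's own proof is equally terse (the base case is justified by the single word ``Clearly'' and the general case is asserted without argument), so your proposal sits at roughly the same level of rigor; but swapping in the conifold-with-nested-squares family would give you a uniform construction across all $n$, including a uniform $n=1$ base rather than the separate appeal to Figure \ref{deformation figure0}, and would spare you the cycle-by-cycle enumeration you rightly flag as the principal obstacle.
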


\begin{proof}
Recall the conifold quiver $Q$ with one nested square given in Figure \ref{holy smokes batman}.i.
Clearly $\sigma S \subset R$.
More generally, the conifold quiver with $n \geq 1$ nested squares satisfies (\ref{nested}); see Figure \ref{normal figure}.
The corresponding ghor center $R$ is therefore not normal for $n \geq 2$ by Proposition \ref{R normal}.
\end{proof}

\begin{figure}
\begin{equation*}
\begin{array}{ccc}
\xy 0;/r.43pc/:
(-12,24)*+{\text{\scriptsize{$1$}}}="1";(12,24)*+{\text{\scriptsize{$2$}}}="2";
(-12,0)*+{\text{\scriptsize{$2$}}}="3";(12,0)*+{\text{\scriptsize{$1$}}}="4";
(-12,-24)*+{\text{\scriptsize{$1$}}}="5";(12,-24)*+{\text{\scriptsize{$2$}}}="6";
(-9,21)*{\cdot}="7";(9,21)*{\cdot}="8";(-9,3)*{\cdot}="9";(9,3)*{\cdot}="10";
{\ar"1";"2"};{\ar@[red]|-a"2";"4"};{\ar@[red]|-b"4";"3"};{\ar"3";"1"};{\ar"3";"5"};{\ar"5";"6"};{\ar"6";"4"};
{\ar"9";"3"};{\ar"3";"10"};{\ar"10";"4"};{\ar"4";"8"};{\ar"8";"2"};{\ar"2";"7"};{\ar"7";"1"};{\ar"1";"9"};
{\ar"9";"7"};{\ar"7";"8"};{\ar"8";"10"};{\ar"10";"9"};
\endxy
&
\xy 0;/r.43pc/:
(-12,24)*+{\text{\scriptsize{$1$}}}="1";(12,24)*+{\text{\scriptsize{$2$}}}="2";
(-12,0)*+{\text{\scriptsize{$2$}}}="3";(12,0)*+{\text{\scriptsize{$1$}}}="4";
(-12,-24)*+{\text{\scriptsize{$1$}}}="5";(12,-24)*+{\text{\scriptsize{$2$}}}="6";
(-9,21)*{\cdot}="7";(9,21)*{\cdot}="8";(-9,3)*{\cdot}="9";(9,3)*{\cdot}="10";
(-6,18)*{\cdot}="11";(6,18)*{\cdot}="12";(-6,6)*{\cdot}="13";(6,6)*{\cdot}="14";
{\ar"1";"2"};{\ar@[red]|-a"2";"4"};{\ar@[red]|-b"4";"3"};{\ar"3";"1"};{\ar"3";"5"};{\ar"5";"6"};{\ar"6";"4"};
{\ar"9";"3"};{\ar"3";"10"};{\ar"10";"4"};{\ar"4";"8"};{\ar"8";"2"};{\ar"2";"7"};{\ar"7";"1"};{\ar"1";"9"};
{\ar"9";"7"};{\ar"7";"8"};{\ar"8";"10"};{\ar"10";"9"};
{\ar"13";"9"};{\ar"9";"14"};{\ar"14";"10"};{\ar"10";"12"};{\ar"12";"8"};{\ar"8";"11"};{\ar"11";"7"};{\ar"7";"13"};
{\ar"13";"11"};{\ar"11";"12"};{\ar"12";"14"};{\ar"14";"13"};
\endxy
&
\xy 0;/r.43pc/:
(-12,24)*+{\text{\scriptsize{$1$}}}="1";(12,24)*+{\text{\scriptsize{$2$}}}="2";
(-12,0)*+{\text{\scriptsize{$2$}}}="3";(12,0)*+{\text{\scriptsize{$1$}}}="4";
(-12,-24)*+{\text{\scriptsize{$1$}}}="5";(12,-24)*+{\text{\scriptsize{$2$}}}="6";
(-9,21)*{\cdot}="7";(9,21)*{\cdot}="8";(-9,3)*{\cdot}="9";(9,3)*{\cdot}="10";
(-6,18)*{\cdot}="11";(6,18)*{\cdot}="12";(-6,6)*{\cdot}="13";(6,6)*{\cdot}="14";
(-3,15)*{\cdot}="15";(3,15)*{\cdot}="16";(-3,9)*{\cdot}="17";(3,9)*{\cdot}="18";
{\ar"1";"2"};{\ar@[red]|-a"2";"4"};{\ar@[red]|-b"4";"3"};{\ar"3";"1"};{\ar"3";"5"};{\ar"5";"6"};{\ar"6";"4"};
{\ar"9";"3"};{\ar"3";"10"};{\ar"10";"4"};{\ar"4";"8"};{\ar"8";"2"};{\ar"2";"7"};{\ar"7";"1"};{\ar"1";"9"};
{\ar"9";"7"};{\ar"7";"8"};{\ar"8";"10"};{\ar"10";"9"};
{\ar"13";"9"};{\ar"9";"14"};{\ar"14";"10"};{\ar"10";"12"};{\ar"12";"8"};{\ar"8";"11"};{\ar"11";"7"};{\ar"7";"13"};
{\ar"13";"11"};{\ar"11";"12"};{\ar"12";"14"};{\ar"14";"13"};
{\ar"17";"13"};{\ar"13";"18"};{\ar"18";"14"};{\ar"14";"16"};{\ar"16";"12"};{\ar"12";"15"};{\ar"15";"11"};{\ar"11";"17"};
{\ar"17";"15"};{\ar"15";"16"};{\ar"16";"18"};{\ar"18";"17"};
\endxy
\\
(i) & (ii) & (iii)
\end{array}
\end{equation*}
\caption{Examples for Proposition \ref{R is not normal}.
Each quiver is drawn on a torus.
In each case, set $p := ba$ and let $n \geq 0$ be the minimum for which $\overbar{p} \sigma^n$ is in the ghor center $R$.
In (i) we have $n = 1$; (ii) $n = 2$; and (iii) $n = 3$.
More generally, these values yield $\sigma^n S \subset R$ and $\sigma^{n-1}S \not \subset R$.
Consequently, only the ghor center of (i) is normal; the ghor centers of (ii) and (iii) are not normal.}
\label{normal figure}
\end{figure}

Let $\mathfrak{m}_0 \in \operatorname{Max}R$ be the maximal ideal generated by all monomials in $R$.
Let $\tilde{\mathfrak{m}}_0 \subset \mathfrak{m}_0$ be the ideal of $R$ generated by all monomials in $R$ which are not powers of $\sigma$.
Then
\begin{equation*}
\mathfrak{m}_0 = (\tilde{\mathfrak{m}}_0, \sigma)R.
\end{equation*}

\begin{Proposition} \label{R = k etc} \
\begin{enumerate}
 \item $\tilde{\mathfrak{m}}_0 = \tilde{\mathfrak{m}}_0S$, and thus $\tilde{\mathfrak{m}}_0$ is an ideal of both $R$ and $S$.
 \item Let $n \geq 1$, and suppose $\sigma^{n} S \subset R$.
Then
\begin{equation*} \label{k sigma}
R = k[\sigma] + (\tilde{\mathfrak{m}}_0, \sigma^{n})S.
\end{equation*}
\end{enumerate}
\end{Proposition}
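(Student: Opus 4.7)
The plan is to prove Part (1) by establishing $\tilde{\mathfrak{m}}_0 S \subseteq \tilde{\mathfrak{m}}_0$, since the reverse inclusion is immediate from $1 \in S$. Because $\tilde{\mathfrak{m}}_0$ is generated as an $R$-ideal by the set $M$ of monomials in $R$ that are not powers of $\sigma$, and $S$ is generated as a $k$-algebra by monomials, it suffices to check that $gs \in \tilde{\mathfrak{m}}_0$ for each generator $g \in M$ and each monomial $s \in S$. Lemma \ref{st not in R} places $gs$ in $R$ immediately. If $gs$ is not a power of $\sigma$, then $gs \in M \subseteq \tilde{\mathfrak{m}}_0$ and there is nothing further to show.

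The delicate sub-case is $gs = \sigma^m$ for some $m \geq 1$, where I must verify $\sigma^m \in \tilde{\mathfrak{m}}_0$. My plan is to invoke Proposition \ref{r in R and}.2 to find $N \geq 0$ with $s\sigma^N \in \hat{Z} \subseteq R$, which yields $\sigma^{m+N} = g\cdot(s\sigma^N) \in gR \subseteq \tilde{\mathfrak{m}}_0$. To descend from $\sigma^{m+N}$ back to $\sigma^m$ itself, I would exploit the asymmetry that $g \in M$ guarantees among its exponents: writing $g = \prod_D x_D^{e_D}$ with the $e_D$ not all equal, the equation $gs = \sigma^m$ forces $m \geq \max_D e_D$, and I plan to replace $g$ by a suitably adjusted generator $g' \in M$ whose complementary factor $\sigma^m/g'$ lies in $R$. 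Finding such $g'$ is the technical heart of the argument and will require a careful combinatorial analysis of monomial divisibility inside $R$, using the characterization of $R$ as $\cap_i \bar{\tau}_\psi(e_iAe_i)$.

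Part (2) then follows from Part (1) by a direct two-inclusion argument. The inclusion $k[\sigma] + (\tilde{\mathfrak{m}}_0, \sigma^n) S \subseteq R$ comes from $\sigma \in R$ (so $k[\sigma] \subseteq R$), $\tilde{\mathfrak{m}}_0 S = \tilde{\mathfrak{m}}_0 \subseteq R$ by Part (1), and $\sigma^n S \subseteq R$ by the standing hypothesis. For the reverse inclusion, every $r \in R$ decomposes as a $k$-linear combination of a constant and non-constant monomials of $R$; the constant lies in $k \subseteq k[\sigma]$, non-constant monomials that are powers of $\sigma$ lie in $k[\sigma]$, and the remaining non-constant monomials lie in $M \subseteq \tilde{\mathfrak{m}}_0 \subseteq (\tilde{\mathfrak{m}}_0, \sigma^n) S$.

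The principal obstacle is the $gs = \sigma^m$ sub-case of Part (1): upgrading an ``$S$-witness'' $(g,s)$ of $\sigma^m \in \tilde{\mathfrak{m}}_0 S$ to an ``$R$-witness'' $(g', r)$ with $g' \in M$ and $r \in R$. This is the step where the cycle-theoretic input from Proposition \ref{r in R and}, the monomial divisibility arguments over $B$, and the explicit description of $R$ as an intersection of $\bar{\tau}_\psi$-images of vertex corner rings must all be deployed together; everything else in the proposition is essentially formal once this has been secured.
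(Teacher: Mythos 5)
Your treatment of Part (2) is correct and is essentially the paper's argument: the containment $k[\sigma] + (\tilde{\mathfrak{m}}_0,\sigma^n)S \subseteq R$ follows from $\sigma \in R$, $\tilde{\mathfrak{m}}_0 S = \tilde{\mathfrak{m}}_0 \subseteq R$, and $\sigma^n S \subseteq R$; the reverse containment follows because $R$ is a $k$-span of monomials and each non-constant monomial in $R$ is either a power of $\sigma$ (hence in $k[\sigma]$) or lies in $\tilde{\mathfrak{m}}_0$.

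For Part (1) you take the same route as the paper (reduce to showing $gs\in\tilde{\mathfrak{m}}_0$ for a monomial generator $g\in M$ and a monomial $s\in S$, feed this through Lemma \ref{st not in R}), and you correctly isolate the only non-trivial sub-case, namely $gs=\sigma^m$. The paper's proof dispatches all of Part (1) with the single sentence ``follows from Lemma \ref{st not in R},'' which produces $gs\in R$ but does not by itself place $gs$ in $\tilde{\mathfrak{m}}_0$ when $gs$ is a power of $\sigma$, so your instinct that something extra is needed here is sound. However, your proposed closure of this sub-case is not a proof: invoking Proposition \ref{r in R and}.2 to get $s\sigma^N\in R$ only yields $\sigma^{m+N}\in\tilde{\mathfrak{m}}_0$, and the ``descent'' from $\sigma^{m+N}$ to $\sigma^m$ that you flag as the technical heart is left entirely open; your sketch of choosing a ``suitably adjusted generator $g'$'' is a statement of the goal, not an argument. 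The natural way to finish is to show that in this sub-case the complementary monomial $s=\sigma^m/g$ actually lies in $R$ itself (not merely in $S$), which would give $\sigma^m = g\cdot s \in gR\subseteq\tilde{\mathfrak{m}}_0$ at once; observe that the cycles underlying $g$ and $s$ then lie in $\mathcal{C}^u$ and $\mathcal{C}^{-u}$ with $u\neq 0$, so one needs the intersection-of-lifts construction from the proof of Lemma \ref{st not in R} (or a variant that handles the antiparallel homology classes) applied at every vertex $i$, using that $g\in R$ supplies a representing cycle through each $i$. Until you either prove $s\in R$ in this situation or show the sub-case $gs=\sigma^m$ is vacuous, Part (1) of your proposal has a genuine gap.
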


\begin{proof}
The equality $\tilde{\mathfrak{m}}_0 = \tilde{\mathfrak{m}}_0S$ follows from Lemma \ref{st not in R}.
Thus,
\begin{equation*}
R \stackrel{\textsc{(i)}}{\subseteq} k[\sigma] + \tilde{\mathfrak{m}}_0 \subseteq  k[\sigma] + (\tilde{\mathfrak{m}}_0, \sigma^{n})S \stackrel{\textsc{(ii)}}{\subseteq} R,
\end{equation*}
where (\textsc{i}) holds since $R$ is generated by monomials and (\textsc{ii}) holds since ${\tilde{\mathfrak{m}}_0S = \tilde{\mathfrak{m}}_0 \subset R}$.
\end{proof}

\begin{Theorem} \label{R normal'}
Let $R$ and $S$ be the center and cycle algebra of a ghor algebra.
The following are equivalent:
\begin{enumerate}
 \item $R$ is normal.
 \item $\sigma S \subset R$.
 \item $R = k + \mathfrak{m}_0S$.
 \item $R = k + J$ for some ideal $J$ in $S$.
\end{enumerate}
\end{Theorem}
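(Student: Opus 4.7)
The equivalence (1) $\Leftrightarrow$ (2) is Proposition \ref{R normal}, and (3) $\Rightarrow$ (4) is immediate since $\mathfrak{m}_0 S$ is itself an ideal of $S$. So the plan is to prove (2) $\Rightarrow$ (3) and (4) $\Rightarrow$ (2). For (2) $\Rightarrow$ (3), I would invoke Proposition \ref{R = k etc}.2 with $n=1$, which gives $R = k[\sigma] + (\tilde{\mathfrak{m}}_0, \sigma)S$. Since $\mathfrak{m}_0$ is generated in $R$ by $\tilde{\mathfrak{m}}_0$ together with $\sigma$, Proposition \ref{R = k etc}.1 gives $\mathfrak{m}_0 S = \tilde{\mathfrak{m}}_0 + \sigma S$, which lies in $R$ by hypothesis; the chain $k[\sigma] \subseteq k + \sigma S \subseteq k + \mathfrak{m}_0 S$ then yields $R = k + \mathfrak{m}_0 S$.

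The main implication is (4) $\Rightarrow$ (2), and the idea is to exploit the total-degree grading on $B$, which both $R$ and $S$ inherit as graded subalgebras (each is spanned by monomials in the $x_D$). Suppose $R = k + J$ with $J$ an ideal of $S$. If $1 \in J$ then $J = S$, so $R = S$ and $\sigma S \subseteq R$ trivially. Otherwise, I would use the $\mathbb{G}_m$-action on $B$ given by $x_D \mapsto \lambda x_D$: it preserves $R$ and $S$, so each translate $\lambda \cdot J$ is again an ideal of $S$ contained in $\lambda \cdot R = R$. Replace $J$ by its $\mathbb{G}_m$-saturation
\[
\tilde{J} \;:=\; \sum_{\lambda \in k^*} \lambda \cdot J.
\]
This is still an ideal of $S$ contained in $R$, still satisfies $R = k + \tilde{J}$, and is $\mathbb{G}_m$-invariant by construction, hence homogeneous with respect to total degree: $\tilde{J} = \bigoplus_{n \geq 0} \tilde{J}_n$.

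Comparing graded pieces of $R = k + \tilde{J}$, the identity $R_0 = k$ forces $\tilde{J}_0 \in \{0, k\}$. If $\tilde{J}_0 = k$ then $1 \in \tilde{J}$ and $\tilde{J} = S$, so again $R = S$; if $\tilde{J}_0 = 0$ then $\tilde{J}_n = R_n$ for every $n \geq 1$, so $\tilde{J} = \bigoplus_{n \geq 1} R_n = \mathfrak{m}_0$. Either way, $\mathfrak{m}_0$ is an ideal of $S$, giving $\sigma S \subseteq \mathfrak{m}_0 \subseteq R$. The crux is that an arbitrary ideal $J$ with $R = k + J$ need not be homogeneous, so the main hurdle is to pass from $J$ to a graded ideal without disturbing the decomposition $R = k + J$; the $\mathbb{G}_m$-averaging step accomplishes exactly this.
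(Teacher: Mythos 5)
Your proof is correct. The equivalences (1) $\Leftrightarrow$ (2), (2) $\Rightarrow$ (3), and (3) $\Rightarrow$ (4) track the paper's argument closely: your (2) $\Rightarrow$ (3) routes through Proposition \ref{R = k etc} rather than invoking Lemma \ref{st not in R} directly, but both ultimately rest on that lemma and are essentially interchangeable.

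The genuinely different step is (4) $\Rightarrow$ (2). The paper proves this by invoking Lemma \ref{sigma n} to produce the minimal threshold $n$ with $\sigma^{n-1}S \not\subseteq R$ and $\sigma^n S \subset R$, picking a witness $g \in S$ with $g\sigma^{n-1} \notin R$, and using the fact that $c + \sigma \in J$ for some scalar $c$ to run the computation $cg\sigma^{n-1} = (c+\sigma)g\sigma^{n-1} - g\sigma^n \in J + R = R$, forcing $c = 0$ and hence $\sigma \in J$. Your argument instead exploits the total-degree grading on $B$, $R$, and $S$: you replace $J$ by its $\mathbb{G}_m$-saturation $\tilde{J} = \sum_{\lambda} \lambda \cdot J$, which is a homogeneous ideal of $S$ still satisfying $R = k + \tilde{J}$, and then a degree-by-degree comparison against $R_0 = k$ identifies $\tilde{J}$ with $\mathfrak{m}_0$ (or with all of $S$ in the degenerate case $1 \in \tilde{J}$, which cannot actually occur here since $S \not\subseteq R$ for a nonnoetherian dimer algebra, but your argument handles it anyway). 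Your route is more conceptual — it isolates the fact that the only graded ideal $J$ of $S$ with $k + J = R$ is the irrelevant ideal $\mathfrak{m}_0$ — and it avoids Lemma \ref{sigma n} entirely, at the cost of introducing the torus action and the Vandermonde-type homogenization argument. The paper's route is more elementary and self-contained within the already-established lemmas of the section. Both are valid; yours arguably makes clearer \emph{why} the implication holds.
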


\begin{proof}
If the ghor algebra is noetherian, then the conditions trivially hold since in this case $R = S$ and $R$ is normal.
So suppose the ghor algebra is nonnoetherian.

\indent (1) $\Leftrightarrow$ (2) holds by Proposition \ref{R normal}.

(2) $\Rightarrow$ (3): Suppose $\sigma S \subset R$.
Then
\begin{equation*}
R \stackrel{\textsc{(i)}}{=} k + \mathfrak{m}_0 \subseteq k + \mathfrak{m}_0S \stackrel{\textsc{(ii)}}{\subseteq} R,
\end{equation*}
where (\textsc{i}) holds since $R$ is generated over $k$ by a set of monomials in $k[\mathcal{S}']$, and $\mathfrak{m}_0 \subset R$ is generated over $R$ by all monomials in $R$.
To show (\textsc{ii}), let $r \in \mathfrak{m}_0$ and $s \in S$; we claim that $rs \in R$.
Since $\mathfrak{m}_0$ is generated by the monomials in $R$, we may assume that $r$ is a monomial.
Thus, if $r \not = \sigma^m$ for all $m \geq 1$, then $rs \in R$ by Lemma \ref{st not in R}.
Otherwise $rs \in \sigma S$.
But $\sigma S \subset R$ by assumption, and so $rs \in R$, proving our claim.
Therefore $R = k + \mathfrak{m}_0S$.

(3) $\Rightarrow$ (2): Suppose $R = k + \mathfrak{m}_0S$.
Then, since $\sigma \in \mathfrak{m}_0$, we have $\sigma S \subset \mathfrak{m}_0S \subset R$.

(3) $\Rightarrow$ (4): Clear.

(4) $\Rightarrow$ (2): Suppose $R = k + J$ for some ideal $J$ of $S$.
By Lemma \ref{big enough}, there is some $n \geq 1$ such that $\sigma^{n-1}S \not \subseteq R$ and $\sigma^n S \subset R$.
Fix $g \in S$ for which $g\sigma^{n-1} \not \in R$.

Since $\sigma \in R = k + J$, there is some $c \in k$ such that $c+ \sigma \in J$.
Then
\begin{equation*}
c g \sigma^{n-1} = (c + \sigma)g \sigma^{n-1} - g \sigma^n \in JS + R = J + R = R.
\end{equation*}
Whence $c = 0$ since $g \sigma^{n-1} \not \in R$.
Thus $\sigma \in J$, and therefore $\sigma S \subset JS = J \subset R$.
\end{proof}

\begin{Corollary} \label{normal corollary}
If the reduced center $\hat{Z} = Z/\operatorname{nil}Z$ of a (noetherian or nonnoetherian) dimer algebra is normal, then $\hat{Z} = k + \mathfrak{m}_0S$.
\end{Corollary}

\begin{proof}
First observe that $\hat{Z} = k + \mathfrak{m}_0S$ holds trivially if the dimer algebra is noetherian: in this case, $\hat{Z} = Z = S$ and $\mathfrak{m}_0 \subset Z$ is the maximal ideal generated by all monomials in $Z$.

So suppose the dimer algebra is nonnoetherian.
If $\hat{Z}$ is normal, then
\begin{equation*}
R \subseteq \bar{R} \stackrel{\textsc{(i)}}{=} \bar{Z} = \hat{Z} \stackrel{\textsc{(ii)}}{\subseteq} R,
\end{equation*}
where (\textsc{i}) holds by Theorem \ref{integral closure theorem} and (\textsc{ii}) holds by Theorem \ref{subalgebra}.
Thus $R = \hat{Z}$, and so $R$ is normal.
But then
\begin{equation*}
\hat{Z} = R \stackrel{\textsc{(i)}}{=} k + \mathfrak{m}_0S,
\end{equation*}
where (\textsc{i}) holds by Theorem \ref{R normal'}.
\end{proof}

\ \\
\textbf{Acknowledgments.}
The author was supported by the Austrian Science Fund (FWF) grant P34854. 
Part of this article is based on work supported by the Heilbronn Institute for Mathematical Research.

\bibliographystyle{hep}
\def\cprime{$'$} \def\cprime{$'$}

\end{document}